\documentclass[12pt]{amsart}

\usepackage{hyperref}
\usepackage{amssymb, amsmath, amsthm}
\usepackage{mathrsfs}

\addtolength{\hoffset}{-1cm} \addtolength{\textwidth}{2cm}
\addtolength{\voffset}{-1cm} \addtolength{\textheight}{2cm}

\setcounter{secnumdepth}{5}

\numberwithin{equation}{section}

\theoremstyle{plain}
\newtheorem{thm}{Theorem}[section]

\newtheorem{lemma}[thm]{Lemma}
\newtheorem{cor}[thm]{Corollary}
\theoremstyle{definition}
\newtheorem{definition}[thm]{Definition}

\newcommand{\R}{\mathbb{R}}
\newcommand{\Z}{\mathbb{Z}}

\renewcommand{\fam}{\widetilde}
\newcommand{\parent}[1]{{#1}^+}
\newcommand{\gparent}[1]{{#1}^{++}}
\newcommand{\dil}[2]{\delta_{#2}(#1)}
\newcommand{\norm}[1]{\rho({#1})}

\begin{document}

\title[Anisotropic Carleson operators]{Bounds for anisotropic Carleson operators}
\author[J. Roos]{Joris Roos}
\date{\today}
\begin{abstract}
We prove weak $(2,2)$ bounds for maximally modulated anisotropically homogeneous smooth multipliers on $\mathbb{R}^n$. These can be understood as generalizing the classical one-dimensional Carleson operator. For the proof we extend the time-frequency method by Lacey and Thiele to the anistropic setting. We also discuss a related open problem concerning Carleson operators along monomial curves.
\end{abstract}
\subjclass[2010]{42B20, 42B25, 44A12}
\keywords{Carleson operators, anisotropic dilations, time-frequency analysis, singular integrals}
\address{Mathematical Institute, University of Bonn, Endenicher Allee 60, 53115 Bonn, Germany. {\it Current Address:} Department of Mathematics, University of Wisconsin-Madison, 480 Lincoln Dr, Madison, WI-53703, USA}
\email{jroos@math.wisc.edu}

\maketitle

\section{Introduction}

Let us consider $\R^n$ equipped with the anisotropic dilations given by
\begin{equation}\label{eqn:dilation} \dil{x}{\lambda} = (\lambda^{\alpha_1} x_1, \dots, \lambda^{\alpha_n} x_n),
\end{equation}
where $\alpha=(\alpha_1,\dots,\alpha_n)$ with $\alpha_i\ge 1$ for $i=1,\dots,n$. 
We write $|\alpha|=\sum_{i=1}^n \alpha_i$ and fix the anisotropic norm
\[ \norm{x} = \max\{ |x_i|^{\frac1{\alpha_i}}\,:\,i=1,\dots,n\}. \]
 For an integer $\nu\ge 0$, we say that a function $m$ on $\R^n$ is in the class $\mathscr{M}^\nu$ if
\begin{enumerate}
\item[(a)] $m$ is bounded and contained in $C^{\nu}(\R^n\setminus\{0\})$, and
\item[(b)] $m(\dil{\xi}{\lambda})=m(\xi)$ for all $\xi\not=0$ and $\lambda>0$.
\end{enumerate}
Let us denote
\[ \|m\|_{\mathscr{M}^\nu} = \sup_{|\beta| \le \nu} \sup_{\norm{\xi}=1} |\partial^{\beta} m(\xi)|. \]
Define the Carleson operator associated with the multiplier $m$ as
\[ \mathscr{C}_m f(x) = \sup_{N\in\R^n} \left|\int_{\R^n} \widehat{f}(\xi) e^{ix\xi} m(\xi-N) d\xi \right|. \]
Replacing $\alpha$ by $c \alpha$ for some scalar $c$ does not modify the classes $\mathcal{M}^\nu$. Thus we make the assumption $\alpha_1=1$ for normalization. For technical reasons we also assume that the $\alpha_i$ are positive integers.\\

Then we can state our main result as follows.
\begin{thm}\label{thm:ac_anisocarleson} Let $\nu_0\ge 3|\alpha|+2$ be an integer. There exists $C>0$ depending only on $\alpha$ and $\nu_0$ such that for all $m\in \mathscr{M}^{\nu_0}$ we have
\begin{equation}\label{eqn:ac_main}
\|\mathscr{C}_m f\|_{2,\infty} \le C \|m\|_{\mathscr{M}^{\nu_0}} \|f\|_2.
\end{equation}
\end{thm}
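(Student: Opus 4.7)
The plan is to extend the Lacey--Thiele time-frequency proof of Carleson's theorem to the anisotropic phase plane on $\R^n\times\R^n$. After linearizing the supremum by a measurable function $N:\R^n\to\R^n$, the problem reduces to the uniform bound
\[ \Big\| \int \widehat{f}(\xi)\, e^{ix\xi}\, m(\xi-N(x))\, d\xi \Big\|_{2,\infty} \;\lesssim\; \|m\|_{\mathscr{M}^{\nu_0}} \|f\|_2, \]
and a standard restricted weak-type reduction further lets us replace weak-$L^2$ by a bilinear pairing against an indicator $g=\mathbf{1}_E$ of a set of finite measure.

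Next, discretize using anisotropic dyadic phase-plane tiles: a spatial dyadic box has sidelengths $(2^{k\alpha_1},\dots,2^{k\alpha_n})$ and a frequency box the reciprocal sidelengths, so each tile $P=I_P\times\omega_P$ satisfies $|I_P|\,|\omega_P|=1$ with aspect ratio matched to $\alpha$. Applying a smooth Littlewood--Paley partition of unity to $m$ adapted to the anisotropic annuli $\{\rho(\xi)\sim 2^j\}$ and further refining each annulus, one expands
\[ T_N f(x) \;=\; \sum_{P} a_P\,\langle f,\phi_P^{(1)}\rangle\,\phi_P^{(2)}(x)\,\mathbf{1}_{\omega_P^{\ast}}(N(x)), \]
where $\phi_P^{(1)},\phi_P^{(2)}$ are $L^2$-normalized wave packets concentrated on $P$, $\omega_P^{\ast}$ is a designated child of $\omega_P$ marking the tile's modulation frequency, and the coefficients $a_P$ are uniformly bounded by $\|m\|_{\mathscr{M}^{\nu_0}}$ with rapid off-tile decay coming from assumption (a).

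The core of the argument is the tree selection algorithm. Define a partial order $P\le P'$ by $\omega_{P'}\subseteq\omega_P$ and $I_P\subseteq I_{P'}$, let a tree $\mathbf{T}$ be a set of tiles with common top $T$, and split $\mathbf{T}$ into a $1$-tree and $2$-tree according to the relative position of $\omega_T^{\ast}$ and $\omega_P^{\ast}$. Introduce
\[ \mathrm{size}(\mathbf{P}) \;=\; \sup_{\text{$2$-tree } \mathbf{T}\subseteq\mathbf{P}} \Big(\tfrac{1}{|I_T|}\sum_{P\in\mathbf{T}}|\langle f,\phi_P^{(1)}\rangle|^2\Big)^{\!1/2}, \qquad \mathrm{density}(\mathbf{P}) \;=\; \sup_{P\in\mathbf{P}}\tfrac{|I_P\cap\{N\in\omega_P\}|}{|I_P|}. \]
An iterated maximal tree selection, combined with a Bessel-type bound producing at most $O(\sigma^{-2}\|f\|_2^2)$ trees of size $\sigma$ and a single-tree estimate
\[ \Big|\sum_{P\in\mathbf{T}} a_P\,\langle f,\phi_P^{(1)}\rangle\,\langle \phi_P^{(2)},g\rangle\Big| \;\lesssim\; \mathrm{size}(\mathbf{T})\,\mathrm{density}(\mathbf{T})\,|I_T|, \]
then yields \eqref{eqn:ac_main} after geometric summation in the size and density parameters.

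The main obstacle will be adapting the phase-plane combinatorics to anisotropic tiles. Three points require care: (i) verifying the tile nesting dichotomy (any two tiles are nested in one of the two projections or disjoint in the other) for boxes with anisotropic aspect ratio $\alpha$; (ii) establishing the Bessel/orthogonality inequality for wave packets inside a single $2$-tree, which uses an anisotropic almost-orthogonality argument and a vector-valued covering lemma for anisotropic dyadic boxes; and (iii) controlling the anisotropic Hardy--Littlewood maximal function entering the tree estimate. The smoothness hypothesis $\nu_0\ge 3|\alpha|+2$ is natural since in $n$ anisotropic variables the wave packets decay via $|\alpha|$ effective derivatives, and the three factors of $|\alpha|$ correspond respectively to integrability of a single wave packet, off-diagonal decay between distinct tiles, and summability over the Littlewood--Paley scales arising from $m$.
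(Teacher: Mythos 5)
Your overall strategy matches the paper's: linearize the supremum, reduce to a bilinear pairing against $\mathbf{1}_E$, discretize into anisotropic phase-plane tiles, and run a Lacey--Thiele mass/energy/tree-estimate argument. However, there are two genuine gaps that you have not addressed, and they are precisely the places where the anisotropic setting forces new ideas.

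First, in the isotropic case one can assume after rotation that $m$ is supported in a fixed cone, which is what makes the discretization by tiles with a distinguished frequency child $\omega_P^\ast$ work: each wave packet needs to be modulated in a consistent ``direction'' relative to $N$. Your proposal applies a Littlewood--Paley decomposition of $m$ and asserts a model-sum expansion, but this silently assumes rotation invariance, which the anisotropic dilations destroy. The paper resolves this by an explicit anisotropic cone decomposition (Lemma \ref{lemma:ac_cones}), combined with the averaging argument \eqref{eqn:ac_average}, showing that one can split $m$ into finitely many pieces each supported in a cone of the form $\{\dil{\xi}{t}: t>0,\ \xi\in Q\}$, and handle each piece with its own choice of child $r\in\{0,1\}^n\setminus\{0\}$. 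Without something of this sort the reduction to a single model sum over tiles with a common $\omega_P^\ast$ is unjustified.

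Second, the single-tree estimate, which you state as if it were routine, is in fact the core difficulty. For a $2$-tree, each wave packet $\psi_P^{N(x)}$ carries an $x$-dependent twist through $m(\xi-N(x))$, so the summands are not fixed modulates of a single kernel and one cannot directly apply a Calder\'on--Zygmund/maximal-function bound as in the classical Carleson argument. The paper handles this by freezing a reference frequency $\xi_0\in\omega_{\mathbf{T}}$, writing $F_J = G*\mathrm{M}_{\xi_0}K*h_x + G*(\mathrm{M}_{N(x)}K-\mathrm{M}_{\xi_0}K)*h_x$, and proving a delicate pointwise bound \eqref{eqn:ac_errest} for the error kernel $R(y)=(\mathrm{M}_{N(x)}K-\mathrm{M}_{\xi_0}K)*h_x(y)$ (split into regimes $\norm{y}\le 2^{k_+}$, $2^{k_+}\le\norm{y}\le 2^{k_-}$, $\norm{y}\ge 2^{k_-}$). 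Your proposal does not engage with this, and it cannot be finessed: it is exactly where the assumption that $m$ is smooth and anisotropically homogeneous is used.

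Finally, your heuristic for why $\nu_0\ge 3|\alpha|+2$ is needed is not accurate: the paper shows that most of the argument only needs $\nu_0\ge|\alpha|+1$, and the threshold $3|\alpha|+2$ is forced by a single tail estimate in the $\mathcal{K}_1$ part of the tree estimate (see \eqref{eqn:tailest}). Also, your ``density'' $\sup_P |I_P\cap\{N\in\omega_P\}|/|I_P|$ is not quite the paper's mass, which is a weighted integral over $E_{P'}$ together with a supremum over all tiles $P'\ge P$ (not just those in the collection); the monotonicity built into that definition is what makes the mass lemma and the Vitali-type covering argument go through.
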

The proof of this theorem is based on the time-frequency techniques of Lacey and Thiele \cite{LT00}. In the one-dimensional case $n=1, \alpha=1$ we recover the weak $(2,2)$ bound for Carleson's operator, which immediately implies Carleson's theorem on almost everywhere convergence of Fourier series \cite{Car66} (up to a standard transference argument, see \cite{KT80}). In the isotropic case $\alpha=(1,\dots,1)$ the theorem follows from a result of Sj\"olin \cite{Sjo71}. Pramanik and Terwilleger \cite{PT03}  study weak $(2,2)$ bounds for the isotropic case in $\R^n$ using the method of Lacey and Thiele. This is extended to strong $(p,p)$ for $1<p<\infty$ by Grafakos, Tao and Terwilleger in \cite{GTT04}. We speculate that Theorem \ref{thm:ac_anisocarleson} could also be extended to strong $(p,p)$ for $1<p<\infty$ using the methods from \cite{GTT04}. However we don't pursue this here to keep the exposition simple.\\

It would be interesting to know if \eqref{eqn:ac_main} holds for all $\nu_0>\frac{|\alpha|}2$, which is a natural lower bound suggested by the anisotropic H\"ormander-Mikhlin theorem (see \cite{FR66}). The same question is also open in the isotropic case. It seems plausible that \eqref{eqn:ac_main} should at least hold for all $\nu_0\ge |\alpha|+1$, because curiously, the only place in the current proof that requires more than that is a tail estimate in the single tree lemma (see \eqref{eqn:tailest}). All the main terms can be bounded using only $\nu_0\ge |\alpha|+1$.\\

The method of Lacey and Thiele involves several ingredients. The first step is a reduction to a discrete dyadic model operator that involves summation over certain regions in phase space which are called tiles. This is detailed in Section \ref{sec:ac_reduction}. In this step we encounter a complication which is caused by the absence of rotation invariance in the anisotropic case. We resolve this using an anisotropic cone decomposition (see Lemma \ref{lemma:ac_cones}). The next step is a certain procedure of combinatorial nature the purpose of which is to organize the tiles into certain collections (which are called trees) each of which is associated with a component of the operator that behaves more like a classical singular integral operator (see Lemma \ref{lemma:ac_tree}). The combinatorial part of the argument requires only little modification compared to the original procedure in \cite{LT00} (see Sections \ref{sec:ac_modelform}, \ref{sec:ac_mass}, \ref{sec:ac_energy}). 
The core component and most difficult part of the proof is the single tree estimate (see Section \ref{sec:ac_tree}). Due to an extra dependence on the linearizing function, the estimate is more technical than the corresponding estimate in \cite{LT00}. It is complicated further by the presence of anisotropic dilations.\\

In Section \ref{sec:ac_motivation} we discuss a related open problem on Carleson operators along monomial curves, which served as a main motivation for this work. We demonstrate how Theorem \ref{thm:ac_anisocarleson} can be applied to a certain family of rougher multipliers that can be seen as a toy model for the Carleson operators along monomial curves. We also discuss the particular case of the parabolic Carleson operator \eqref{eqn:parcarleson}, which exhibits some additional symmetries and is related to Lie's quadratic Carleson operator \cite{Lie09}. Some partial progress on Carleson operators along monomial curves was obtained in \cite{GPRY16}. Also see the related work \cite{PY15} on Carleson operators along paraboloids in dimensions $n\ge 3$.\\

{{\bf Acknowledgments.} The author thanks his doctoral advisor Christoph Thiele for encouragement and countless valuable comments and discussions on this project. He is also deeply indebted to Po-Lam Yung for many important and fruitful conversations. This work was carried out while the author was supported by the German Academic Scholarship Foundation.}

\section{Carleson operators along monomial curves}\label{sec:ac_motivation}
For a positive integer $d\ge 2$, let us consider the multiplier of the Hilbert transform along the curve $(t,t^d)$ in the form
\begin{equation}\label{eqn:curvemult}
m_{d}(\xi,\eta) = p.v.\int_\R e^{i\xi t-i\eta t^d} \frac{dt}t,\,\,(\xi,\eta)\in\R^2. \end{equation}
It is currently an open problem to decide whether $\mathscr{C}_{m_d}$ satisfies any $L^p$ bounds. The multiplier $m_d$ satisfies the anisotropic dilation symmetry
\[ m_d(\lambda\xi, \lambda^d \eta) = m_d(\xi,\eta) \]
for $\lambda>0$ and $(\xi,\eta)\not=0$. However, Theorem \ref{thm:ac_anisocarleson} does not apply because $m_d$ is too rough to be in the class $\mathscr{M}^{\nu}$ for any positive integer $\nu$.

Next we discuss a family of toy model operators. In the following discussion we focus on the intersection of the quadrant $\{\xi\ge 0, \eta\ge 0\}$ with the region $\eta^{\frac1d} \le 2\xi$. The other quadrants can be treated similarly (though depending on the parity of $d$ the phase might not have a critical point in each quadrant; this is an inconsequential subtlety that we will ignore). Our restriction to the region $\eta^{\frac1d} \le 2\xi$ is natural because stationary phase considerations show that $m_d$ is smooth away from the axis $\eta=0$.
For $\xi>0$ and $\eta>0$ we define
\begin{equation} 
m_{d,1}(\xi,\eta) =\left(\eta^{\frac1d}\xi^{-1}\right)^{\frac{d'}2} e^{i \left(\eta^{\frac1d}\xi^{-1}\right)^{-d'}} \psi(\eta^{\frac1d}\xi^{-1}),
\end{equation}
where $\frac1d+\frac1{d'}=1$ and $\psi$ is a smooth cutoff function supported in $[-2, 2]$ and equal to one on a slightly smaller interval. We extend $m_{d,1}$ continuously by setting it equal to zero on the remainder of $\R^2$.

From a standard computation using the stationary phase principle (see \cite[Ch. VIII.1, Prop. 3]{Ste93}) we can see that, up to a negligible constant rescaling, this term constitutes the main contribution to the oscillatory integral in \eqref{eqn:curvemult}. The remainder term from stationary phase is smoother in the variables $\xi$ and $\eta$ and is therefore simpler to handle. We will ignore it for the purpose of this discussion.

From the definition we see directly that $m_{d,1}$ (and therefore also $m_d$) is only H\"older continuous of class $C^{\frac1{2(d-1)}}$ along the axis $\eta=0$ while it is infinitely differentiable away from that axis.

Let us from here on denote 
\[\zeta=\zeta(\xi,\eta)=\eta^{\frac1d}\xi^{-1}\]
for $\xi>0, \eta>0$. Since we do not know how to handle $m_{d,1}$ we introduce a family of modified, less oscillatory multipliers which is defined on the quadrant $\{\xi>0, \eta>0\}$ by
\begin{equation} 
m_{d,\delta}(\xi,\eta) =\zeta^{\frac{d'}2} e^{i \zeta^{-d'\delta}} \psi(\zeta),
\end{equation}
where $0\le \delta\le 1$ is a parameter (and we again extend $m_{d,\delta}$ to the rest of $\R^2$ by zero). The multiplier $m_{d,\delta}$ still fails to be in $\mathscr{M}^{\nu}$ for every positive integer $\nu$ and $\delta>0$. However, we can nevertheless apply Theorem \ref{thm:ac_anisocarleson} to bound $\mathscr{C}_{m_{d,\delta}}$ for small enough $\delta$.
For this purpose we assume that $\psi$ takes the form
\[ \psi = \sum_{j\le 0} \varphi_j, \]
where $\varphi_j(x)=\varphi(2^{-j} x)$ and $\varphi$ is a smooth bump function supported in $[1/2,2]$ and satisfying $\sum_{j\in\Z} \varphi_j(x) = 1$ for all $x\not=0$.
Then we have the following consequence of Theorem \ref{thm:ac_anisocarleson}.
\begin{cor}
There exists $\delta_0>0$ such that for all $0\le \delta< \delta_0$ we have
\[ \|\mathscr{C}_{m_{d,\delta}} f\|_{2,\infty} \lesssim \|f\|_2. \]
\end{cor}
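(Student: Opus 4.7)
My plan is to dyadically decompose $m_{d,\delta}$ in the variable $\zeta = \eta^{1/d}\xi^{-1}$, apply a non-anisotropic rescaling in $\eta$ that moves the support of each piece away from the singular axis $\eta=0$, and then invoke Theorem~\ref{thm:ac_anisocarleson}. Using $\psi = \sum_{j\le 0}\varphi_j$ we obtain $m_{d,\delta} = \sum_{j\le 0} m_{d,\delta}^{(j)}$ with
\[
m_{d,\delta}^{(j)}(\xi,\eta) = \zeta^{d'/2}\,e^{i\zeta^{-d'\delta}}\varphi_j(\zeta).
\]
Each piece is anisotropically $0$-homogeneous (because $\zeta$ is) but supported on the wedge $\zeta \approx 2^j$. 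Since $\mathscr{C}_{m_1+m_2} \le \mathscr{C}_{m_1}+\mathscr{C}_{m_2}$ pointwise and the weak $L^2$ (quasi-)norm satisfies a triangle inequality up to a constant, it suffices to produce bounds $\|\mathscr{C}_{m_{d,\delta}^{(j)}}f\|_{2,\infty} \lesssim c_j\|f\|_2$ with $\sum_{j\le 0} c_j < \infty$.

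Applying Theorem~\ref{thm:ac_anisocarleson} to $m_{d,\delta}^{(j)}$ directly is useless: each $\partial_\eta$ produces a factor $\eta^{1/d-1}\approx 2^{j(1-d)}$ on the support and $\|m_{d,\delta}^{(j)}\|_{\mathscr{M}^{\nu_0}}$ explodes as $j\to-\infty$. The remedy is that the Carleson operator norm is invariant under the non-anisotropic rescaling $m \mapsto m(\cdot,\lambda\cdot)$: with $E_\lambda f(x,y) = f(x,\lambda y)$ a direct computation yields the intertwining $\mathscr{C}_m\circ E_\lambda = E_\lambda\circ \mathscr{C}_{m(\cdot,\lambda\cdot)}$, and since $E_\lambda$ scales both the $L^2$ and $L^{2,\infty}$ norms by the same factor $\lambda^{-1/2}$, the operator norm $L^2\to L^{2,\infty}$ is unchanged. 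Choosing $\lambda = 2^{jd}$ replaces $m_{d,\delta}^{(j)}$ by
\[
\widetilde m_j(\xi,\eta) := m_{d,\delta}^{(j)}(\xi,2^{jd}\eta) = 2^{jd'/2}\,\zeta^{d'/2}\,e^{i\,2^{-jd'\delta}\zeta^{-d'\delta}}\varphi(\zeta),
\]
which is supported on $\zeta \in [\tfrac12,2]$, bounded away from $\eta=0$. Crucially, $\widetilde m_j$ is still anisotropically $0$-homogeneous because the $\eta$-rescaling $(\xi,\eta)\mapsto(\xi,2^{jd}\eta)$ commutes with the anisotropic dilations $(\xi,\eta)\mapsto(\lambda\xi,\lambda^d\eta)$.

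On the unit anisotropic sphere the support of $\widetilde m_j$ is compact and bounded away from the axis, so $\zeta$ and all of its partial derivatives in $(\xi,\eta)$ are $O(1)$. The only source of growth in $j$ is the differentiation of the phase $2^{-jd'\delta}\zeta^{-d'\delta}$, which by Fa\`a di Bruno contributes at most a factor of $2^{-jd'\delta}$ per derivative on this support. Combining with the amplitude prefactor $2^{jd'/2}$ gives
\[
\|\widetilde m_j\|_{\mathscr{M}^{\nu_0}} \lesssim 2^{j(d'/2 - \nu_0 d'\delta)},
\]
whence, by Theorem~\ref{thm:ac_anisocarleson} and the scaling identity, $c_j \lesssim 2^{j(d'/2 - \nu_0 d'\delta)}$. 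The sum over $j \le 0$ is then a convergent geometric series precisely when $d'/2 - \nu_0 d'\delta > 0$, i.e.\ when $\delta < \delta_0 := \frac{1}{2\nu_0}$, which is the source of the smallness hypothesis. The main obstacle is exactly this competition in the exponent: the large smoothness order $\nu_0$ required by Theorem~\ref{thm:ac_anisocarleson} forces $\delta_0$ to be small, and without the rescaling trick the derivative blow-up in $\eta$ would swamp the gain $2^{jd'/2}$ for every $\delta>0$.
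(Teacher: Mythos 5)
Your proposal is correct and follows essentially the same line as the paper's own argument: decompose dyadically in $\zeta = \eta^{1/d}\xi^{-1}$, rescale each piece by $\eta\mapsto 2^{jd}\eta$ (which commutes with the anisotropic dilations and leaves the $L^2\to L^{2,\infty}$ operator norm of the Carleson operator invariant), bound $\|\widetilde m_j\|_{\mathscr{M}^{\nu_0}}\lesssim 2^{j(d'/2-\nu_0 d'\delta)}$, and sum the geometric series to obtain $\delta_0 = \tfrac{1}{2\nu_0}$. The only cosmetic difference is that you absorb the amplitude prefactor $2^{jd'/2}$ into the rescaled multiplier, whereas the paper keeps it as a separate factor in front of the rescaled operator; the final bound and the value of $\delta_0$ are identical.
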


\begin{proof}
Let us write
\[m_{d,\delta,j}(\xi,\eta) =\zeta^{\frac{d'}2} e^{i \zeta^{-d'\delta}} \varphi_j(\zeta)\quad\text{ (for }\xi>0, \eta>0),\]
\[ T_j f(x,y) = \int_{\R^2} \widehat{f}(\xi,\eta) m_{d,\delta,j}(\xi,\eta) e^{ix\xi+iy\eta} d(\xi,\eta). \]
By the change of variables $\eta\mapsto 2^{jd} \eta$, we see that
\[ T_j f(x,y) = 2^{j\frac{d'}2} 2^{jd} \int_{\R^2} \widehat{f}(\xi,2^{jd}\eta) \widetilde{m}_{d,\delta,j}(\xi,\eta) e^{ix\xi+i2^{jd}y\eta} d(\xi,\eta) =2^{j\frac{d'}2} \mathrm{D}_{2^{jd}} \widetilde{T}_j \mathrm{D}_{2^{-jd}} f(x,y),\]
where $\mathrm{D}_\lambda f(x,y) = f(x,\lambda y)$ and
\[ \widetilde{m}_{d,\delta,j} (\xi,\eta) = \zeta^{\frac{d'}2} e^{i 2^{-jd'\delta} \zeta^{-d'\delta}} \varphi(\zeta)\quad\text{ (for }\xi>0, \eta>0),\]
\[ \widetilde{T}_j f(x,y) = \int_{\R^2} \widehat{f}(\xi,\eta) \widetilde{m}_{d,\delta,j}(\xi,\eta) e^{ix\xi+iy\eta} d(\xi,\eta). \]
We have 
\[ \|\widetilde{m}_{d,\delta,j}\|_{\mathscr{M}^\nu} \lesssim 2^{-jd'\delta \nu}\]
for every integer $\nu\ge 0$ (where the implied constant depends on $\nu$, $d$, $\delta$ and $\varphi$). Using Theorem \ref{thm:ac_anisocarleson} we therefore obtain
\[\|\mathscr{C}_{m_{d,\delta}}\|_{L^2\to L^{2,\infty}} \lesssim \sum_{j\le 0} 2^{j\frac{d'}2} \|\mathscr{C}_{\widetilde{m}_{d,\delta,j}} \|_{L^2\to L^{2,\infty}} 
\lesssim \sum_{j\le 0} 2^{jd'(\frac12-\delta \nu_0)}.\]
Thus, setting $\delta_0 = \frac12 \nu_0^{-1}$ yields the claim.
\end{proof}
An improvement for the bound on $\nu_0$ in Theorem \ref{thm:ac_anisocarleson} will give an improvement for $\delta_0$. However, even if we could show Theorem \ref{thm:ac_anisocarleson} for all $\nu_0>|\alpha|/2$, we would still have $\delta_0<\frac1{|\alpha|}<1$. Therefore additional insight is likely to be required to bound the operator $\mathscr{C}_{m_{d,1}}$ (and thus $\mathscr{C}_{m_d}$).

For the case of the parabola, $d=2$, there are some additional obstructions in bounding $\mathscr{C}_{m_2}$. Let us write the parabolic Carleson operator as
\begin{equation}\label{eqn:parcarleson}
\mathscr{C}^{\mathrm{par}} f(x,y) = \sup_{N\in\R^2} \left| p.v.\int_\R f(x-t, y-t^2) e^{iN_1 t+ i N_2 t^2} \frac{dt}t\right|.
\end{equation}
Apart from the linear modulation symmetries given by
\[ \mathscr{C}^{\mathrm{par}} f = \mathscr{C}^{\mathrm{par}} \mathrm{M}_{N} f \]
for $N\in\R^2$, there are additional modulation symmetries. For a polynomial in two variables, $P=P(x,y)$, we write the corresponding polynomial modulation as
\[ \mathrm{M}_{P} f(x,y) = e^{iP(x,y)} f(x,y).\]
Then we have that
\begin{equation}\label{eqn:ac_quadraticmod}
\mathscr{C}^{\mathrm{par}} \mathrm{M}_{N x^2} f = \mathscr{C}^{\mathrm{par}} f,
\end{equation}
\[ \mathscr{C}^{\mathrm{par}} \mathrm{M}_{N x(y+x^2)} f = \mathscr{C}^{\mathrm{par}} f, \]
\[ \mathscr{C}^{\mathrm{par}} \mathrm{M}_{N (y+x^2)^2} f = \mathscr{C}^{\mathrm{par}} f \]
hold for all $N\in\R$. The quadratic modulation symmetry \eqref{eqn:ac_quadraticmod} suggests a connection to Lie's quadratic Carleson operator \cite{Lie09}. Indeed, even a certain partial $L^2$ bound for $\mathscr{C}^{\mathrm{par}}$ would immediately imply an $L^2$ bound for the quadratic Carleson operator (see \cite{GPRY16}).
These are all the polynomial modulation symmetries of the operator $\mathscr{C}^{\mathrm{par}}$ (up to linear combination). To see that, introduce the change of variables $\tau(x,y)=(x,y+x^2)$ and observe that
\[\mathscr{C}^{\mathrm{par}}f=\mathscr{C}^{\mathrm{sh}}(f\circ\tau^{-1})\circ \tau,\]
where
\[\mathscr{C}^{\mathrm{sh}}f(x,y)=\sup_{N\in\R^2} \left|p.v.\int_\R f(x-t,y-2xt) e^{iN_1 t + iN_2 t^2} \frac{dt}t \right|.\]
It is easy to check that we have
\[ \mathscr{C}^{\mathrm{sh}} \mathrm{M}_P f = \mathscr{C}^{\mathrm{sh}}f \]
for a polynomial $P$ if and only if $P$ is of degree at most two. This shows that the list of polynomial modulation symmetries that we gave for $\mathscr{C}^{\mathrm{par}}$ is complete.
Also since $\tau$ is measure preserving, $L^p$ bounds for $\mathscr{C}^{\mathrm{par}}$ and $\mathscr{C}^{\mathrm{sh}}$ are equivalent.

\section{Reduction to a model operator}\label{sec:ac_reduction}

Before we begin we need to introduce some more notation and definitions. 
Denote
\[ \mathrm{dist}_\alpha (A,B) = \inf_{x\in A, y\in B} \norm{x-y}\]
and $\mathrm{dist}_\alpha(A,x)=\mathrm{dist}_\alpha(A,\{x\})$. For $a,b\in\R^n$ we write 
\[ [a,b] = \prod_{i=1}^n [a_i, b_i] \]
and similarly $(a,b), [a,b)$. We will refer to all such sets as \emph{rectangles}. For a rectangle $I\subset\R^n$ we define $c(I)$ to be its center. By an \emph{anisotropic cube} we mean a rectangle $[a,b]$ such that $b_i-a_i=\lambda^{\alpha_i}$ holds for all $i=1,\dots,n$ and some $\lambda>0$.
We define the collection of \emph{anisotropic dyadic cubes} by
\[ \mathcal{D}^{\alpha} = \left\{ [\dil{\ell}{2^k}, \dil{\ell+1}{2^k} )\,:\,\ell\in\Z^n,\,k\in\Z\right\}.\]
Every two anisotropic dyadic cubes have the property that they are either disjoint or contained in one another. Moreover, for every $I\in\mathcal{D}^\alpha$ there exists a unique dyadic cube $\parent{I}\in\mathcal{D}^\alpha$ such that $|\parent{I}|=2^{|\alpha|}|I|$ and $I\subset \parent{I}$. We  call $\parent{I}$ the \emph{parent} of $I$ and say that $I$ is a \emph{child} of $\parent{I}$. 
\begin{definition}A \emph{tile} $P$ is a rectangle in $\R^{n}\times \R^n$ of the form
\[ P = I_P \times \omega_P, \]
where $I_P,\omega_P\in \mathcal{D}^{\alpha}$ and $|I_P|\cdot |\omega_P|=1$. 
\end{definition}
The set of tiles is denoted by $\overline{\mathcal{P}}$. Given a tile $P$ we denote its \emph{scale} by $k_P = |I_P|^{1/|\alpha|}$. For $r\in\{0,1\}^n$ and a tile $P$ with $\omega_P = [\dil{\ell}{2^{-k_P}}, \dil{\ell+1}{2^{-k_P}}]$ we define the \emph{semi-tile} $P(r)$ by
\[ P(r) = I_P \times \omega_{P(r)}, \text{ where } \omega_{P(r)} = \Big[ \dil{\ell+\frac12 r}{2^{-k_P}}, \dil{\ell+\frac12 (r+1)}{2^{-k_P}} \Big].\]
The model operator is built up using a large family of wave packets adapted to tiles. It is convenient to generate this family by letting the symmetry group of our operator act on a single bump function. For this purpose, let $\phi$ be a Schwartz function on $\R^n$ such that $0\le \widehat{\phi}\le 1$ with $\widehat{\phi}$ being supported in $[-\frac{b_0}2, \frac{b_0}2]^n$ and equal to $1$ on $[-\frac{b_1}2, \frac{b_1}2]^n$, where $0<b_1<b_0\ll 1$ are some fixed, small numbers whose ratio is not too large (it becomes clear what precisely is required in Section \ref{sec:ac_tree}). For example, we may set $b_0=\frac1{10}$, $b_1=\frac{9}{100}$.
We denote translation, modulation and dilation of a function $f$ by
\[ \mathrm{T}_y f(x) = f(x-y),\quad\quad (y\in\R^n)\]
\[ \mathrm{M}_\xi f(x) = e^{ix\xi} f(x),\quad\quad (\xi\in\R^n)\]
\[ \mathrm{D}_\lambda^p f(x) = \lambda^{-\frac{|\alpha|}p} f\left(\dil{x}{\lambda^{-1}}\right),\quad\quad(\lambda,p>0),\]
where $|\alpha|=\sum_{i=1}^n \alpha_i$.

Given a tile $P$ and $N\in\R^n$ we define the wave packets $\phi_P, \psi^N_P$ on $\R^n$ by
\begin{align}
\phi_P(x) &= \mathrm{M}_{c(\omega_{P(0)})} \mathrm{T}_{c(I_P)} \mathrm{D}^2_{2^{k(P)}} \phi(x)\\
\widehat{\psi^N_P}(\xi) &= \mathrm{T}_{N}m(\xi)\cdot\widehat{\phi_P}(\xi)
\end{align}
We think of $\phi_P$ as being essentially time-frequency supported in the semi-tile $P(0)$. More precisely, we have that $\widehat{\phi_P}$ is compactly supported in (a small cube centrally contained in) $\omega_{P(0)}$ and $|\phi_P|$ decays rapidly outside of $I_P$.

For $N\in\R^n$ and $r\not=0$ we introduce the dyadic model sum operator 
\begin{align}\label{eqn:ac_modelop}
A^{r,m}_N f(x)=\sum_{P\in\mathcal{\overline{P}}} \langle f,\phi_P\rangle \psi^N_P(x) \mathbf{1}_{\omega_{P(r)}}(N).
\end{align}
This reduces to the model sum of Lacey and Thiele \cite{LT00} in the case $n=\alpha=1$ and to that of Pramanik and Terwilleger \cite{PT03} in the isotropic case $\alpha=(1,\dots,1)$.

\begin{thm}\label{thm:ac_anisocmodel}
For every large enough integer $\nu_0$ there exists $C>0$ depending only on $\nu_0$, $\alpha$ and the choice of $\phi$ such that for all multipliers $m\in \mathscr{M}^{\nu_0}$ we have
\begin{equation}\label{eqn:anisocmodelbound}
\| \sup_{N\in\R^n} |A^{r,m}_{N} f| \|_{2,\infty} \le C \|m\|_{\mathscr{M}^{\nu_0}} \|f\|_2.
\end{equation}
\end{thm}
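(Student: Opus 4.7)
The plan is to follow the Lacey--Thiele restricted weak type scheme, adapted to the anisotropic setting. First I would linearize the supremum by choosing a measurable function $N:\R^n\to\R^n$ and it would suffice to bound $\|A^{r,m}_{N(\cdot)}f\|_{2,\infty}$ uniformly in $N$. By a standard restricted weak type argument, we may assume $f=\mathbf{1}_F$ for a measurable set $F$ and reduce \eqref{eqn:anisocmodelbound} to showing that for every measurable $E\subset\R^n$ of finite measure there exists a major subset $E'\subset E$ with $|E'|\ge |E|/2$ such that
\[ |\langle A^{r,m}_{N(\cdot)} \mathbf{1}_F,\mathbf{1}_{E'}\rangle| \lesssim \|m\|_{\mathscr{M}^{\nu_0}} |F|^{1/2}|E|^{1/2}. \]
The exceptional set $E\setminus E'$ will be built from the anisotropic Hardy--Littlewood maximal function of $\mathbf{1}_F$, exploiting the anisotropic balls $\norm{\cdot}\le R$.

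Next I would rewrite the left-hand side as a bilinear form $\Lambda(\mathbf{1}_F,\mathbf{1}_{E'})=\sum_{P\in\overline{\mathcal{P}}}a_P$, where
\[ a_P = \langle \mathbf{1}_F,\phi_P\rangle \int_{E'} \psi_P^{N(x)}(x)\mathbf{1}_{\omega_{P(r)}}(N(x))\,dx. \]
Following \cite{LT00,PT03}, I would introduce the notions of \emph{mass} and \emph{energy} of a finite collection of tiles $\mathbf{P}$, defined (roughly) by
\[ \mathrm{mass}(\mathbf{P}) = \sup_{P\in\mathbf{P}}\sup_{P'\ge P}\frac{|\{x\in E'\cap I_{P'}:N(x)\in \omega_{P'(r)}\}|}{|I_{P'}|}\cdot w(I_{P'}), \]
\[ \mathrm{energy}(\mathbf{P})=\sup_T |I_T|^{-1/2}\Big(\sum_{P\in T}|\langle \mathbf{1}_F,\phi_P\rangle|^2\Big)^{1/2}, \]
where $w$ is an anisotropic weight adapted to $I_{P'}$ and the inner sup in the energy is taken over ``$r$-trees'' $T\subset\mathbf{P}$. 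Then I would invoke (the analogues, to be stated as Lemma \ref{lemma:ac_mass} and Lemma \ref{lemma:ac_energy}, of) the two selection lemmas which partition any finite collection of tiles with mass $\le 2^{2n}\mu$ and energy $\le 2^n \varepsilon$ into a collection of trees $\mathcal{T}$ with $\sum_{T\in\mathcal{T}}|I_T|\lesssim \mu^{-1}|F|+\varepsilon^{-2}|E|$ and a residual collection of strictly smaller mass or energy. Iterating this dyadically in $\mu$ and $\varepsilon$ yields a partition of $\overline{\mathcal{P}}$ into trees whose total $|I_T|$-sum is controlled.

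The core input, and the hardest step, is the single tree estimate (Lemma \ref{lemma:ac_tree}), which asserts $|\sum_{P\in T}a_P|\lesssim \|m\|_{\mathscr{M}^{\nu_0}}\,\mathrm{mass}(T)^{1/2}\,\mathrm{energy}(T)\,|I_T|$; substituting this bound and summing over $\mathcal{T}$ at each dyadic scale $(\mu,\varepsilon)$ with the trivial bounds $\mathrm{mass}(T)\lesssim 1$, $\mathrm{energy}(T)\lesssim 1$ (the latter from Bessel and the almost-orthogonality of $\{\phi_P\}$ in an $r$-tree) produces a geometric series in the dyadic parameters that is optimized by the choice $\mu\sim |F|/|E|$, $\varepsilon\sim 1$, giving the desired $|F|^{1/2}|E|^{1/2}$. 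The selection and the exceptional set step require only straightforward anisotropic modifications of \cite{PT03}: the genuine obstacle is the single tree estimate itself, where the dependence on the linearizing $N$ together with the absence of rotation invariance forces the anisotropic cone decomposition (Lemma \ref{lemma:ac_cones}) and accounts for the need to take $\nu_0\ge 3|\alpha|+2$ derivatives of $m$.
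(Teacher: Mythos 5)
Your outline is the right one---linearize, dualize, define mass and energy, apply the selection lemmas, and feed the single-tree estimate into a dyadic iteration---and it matches the Lacey--Thiele architecture the paper uses. But several specifics differ from what the paper actually does, and one of them is a genuine confusion worth flagging.

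First, the paper does not pass through the restricted weak type reduction with $f=\mathbf{1}_F$ and a major subset $E'$ cut out by a maximal function. It simply dualizes to $|\langle Tf,\mathbf{1}_E\rangle|\lesssim \|m\|_{\mathscr{M}^{\nu_0}}|E|^{1/2}\|f\|_2$, scales so that $\|f\|_2=1$ and $|E|\le 1$, and then proves \eqref{eqn:ac_keymodelest} for arbitrary finite tile collections. The reason no exceptional set is needed is that with the $L^1$-normalized weight in the mass definition one has $\mathcal{M}(\mathcal{P})\le\|w^{\nu_1}\|_1\lesssim 1$ automatically once $|E|\le 1$, and this bound is exactly what makes the sum $\sum_\ell 2^{-\ell}\min(1,2^{2\ell})$ converge after the tree estimate is inserted. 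Your scheme, carrying the two free parameters $|F|,|E|$ through an optimization $\mu\sim|F|/|E|$, $\varepsilon\sim 1$, can be made to work but you have not shown that it closes; in particular with $\mathrm{mass}^{1/2}$ in the tree estimate the dyadic sum over small $\ell$ is borderline and one needs to be explicit about where the extra decay comes from.

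Second, the paper's tree estimate Lemma~\ref{lemma:ac_tree} has $\mathcal{M}(\mathbf{T})$ to the first power, not $\mathrm{mass}(T)^{1/2}$; this mirrors \cite{LT00}. Either normalization can be used, but the iteration bookkeeping and the exponents in the Mass Lemma must be chosen consistently, and as written your exponents are not obviously consistent.

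Third, and most importantly, the anisotropic cone decomposition (Lemma~\ref{lemma:ac_cones}) plays no role inside the proof of Theorem~\ref{thm:ac_anisocmodel}. It is used in Section~\ref{sec:ac_reduction} to pass from the full Carleson operator $\mathscr{C}_m$ to the model sums $A^{r,m}_N$, i.e., in the implication from Theorem~\ref{thm:ac_anisocmodel} back to Theorem~\ref{thm:ac_anisocarleson}, compensating for the lack of rotation invariance when you can no longer assume $m$ is supported in a single frequency cone. Inside the model-operator proof the direction $r$ is already fixed and the difficulty is entirely in the single-tree estimate: the $x$-dependence of $\psi^{N(x)}_P$ forces the introduction of the auxiliary convolutor $h_x$, the maximal-function Lemma~\ref{lemma:ac_max}, and a careful kernel analysis. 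Relatedly, the paper does not attribute $\nu_0\ge 3|\alpha|+2$ to the $N$-dependence plus anisotropy in general; it is pinned down to a single tail estimate for $\mathcal{K}_1$ (see \eqref{eqn:tailest}), and the rest of the argument needs only $\nu_0\ge|\alpha|+1$. Identifying that precise bottleneck is one of the paper's observations, and your explanation of the derivative count is not accurate on this point.
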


The proof of the theorem is contained in Sections \ref{sec:ac_modelform}, \ref{sec:ac_mass}, \ref{sec:ac_energy}, \ref{sec:ac_tree}. 
We conclude this section by showing that Theorem \ref{thm:ac_anisocmodel} implies Theorem \ref{thm:ac_anisocarleson}. For this purpose we employ the averaging procedure of Lacey and Thiele \cite{LT00} combined with an anisotropic cone decomposition of the multiplier $m$. By an \emph{anisotropic cone} we mean a subset $\Theta\subsetneq\R^n$ of the form
\[ \Theta=\{\dil{\xi}{t}\,:\,t>0,\,\xi\in Q\} \]
for some cube $Q\subset\R^n$.
Let us denote $\mathcal{B}_s=\{x\,:\,\norm{x}\le s\}$. Let
\begin{equation}\label{eqn:ac_average}
\mathbf{A}^{r,m} f(x) = \lim_{R\to\infty} \frac1{R^{2|\alpha|}} \int_{\mathcal{B}_R} \int_{\mathcal{B}_R} \int_0^1 \mathrm{M}_{-\eta}\mathrm{T}_{-y}\mathrm{D}^2_{2^{-s}} A^{r,m}_{2^{-s}\eta} \mathrm{D}^2_{2^s} \mathrm{T}_y \mathrm{M}_\eta f(x) ds dy d\eta.
\end{equation}
\begin{lemma}\label{lemma:ac_cones} For every $r\in\{0,1\}^n$ and every test function $f$, the function $\mathbf{A}^{r,m} f(x)$ is well-defined and also a test function. We have
\[ \widehat{\mathbf{A}^{r,m} f}(\xi) = \theta_r(\xi) m(\xi) \widehat{f}(\xi) \]
for some smooth function $\theta_r$ that is independent of $m$. Moreover, there exists a constant $\varepsilon_0>0$ and an anisotropic cone $\Theta_r$ such that
\[ \theta_r(\xi) > \varepsilon_0\quad\text{for all }\xi\in\Theta_r.\]
and
\begin{equation}\label{eqn:ac_conescovering}
(-\infty,\varepsilon_0]^n \subset \bigcup_{r\in \{0,1\}^n\setminus\{0\}} \Theta_r.
\end{equation}
\end{lemma}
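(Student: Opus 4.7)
The plan is to compute the Fourier symbol of $\mathbf{A}^{r,m}$ explicitly and read off the claimed structure. As a preliminary step I observe that for each fixed $N \in \R^n$, the operator $A^{r,m}_N$ is itself a Fourier multiplier. Writing $\langle f,\phi_P\rangle$ via $\widehat{f}$ and summing over tiles, the integer translates $\ell_I \in \Z^n$ at a given dyadic scale $k$ Poisson-sum into a Dirac comb with spacing $2\pi \dil{\Z^n}{2^{-k}}$ in the frequency difference. Because $\widehat{\phi}$ is supported in a cube of anisotropic radius $b_0/2 \ll 1 < 2\pi$, only the zero mode of this comb is compatible with both factors $\widehat{\phi_P}(\xi)$ and $\widehat{\phi_P}(\zeta)$ being simultaneously nonzero. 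This collapses the discrete sum to a diagonal Fourier multiplier of the form
\[ M^{r,m}_N(\zeta) = (2\pi)^n m(\zeta - N) \sum_{k,\ell} |\widehat{\phi}(\dil{(\zeta - c_{k,\ell})}{2^k})|^2 \mathbf{1}_{\omega_{P(r)}(k,\ell)}(N), \]
where $c_{k,\ell}$ denotes the center of the $\omega_{P(0)}$-box at scale $k$ with lattice index $\ell$.

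Next I compute the symbol of the conjugated operator appearing in the integrand of \eqref{eqn:ac_average}. Translations commute with Fourier multipliers, so the $y$-dependence vanishes. The dilation $\mathrm{D}^2_{2^s}$ shifts the tile-scale lattice by $k \mapsto k - s$, while the homogeneity $m(\dil{\xi}{\lambda}) = m(\xi)$ together with the modulation by $\eta$ simplifies the $m$-factor to $m(\zeta)$; what remains depends on $s$ only through a continuous translation of the set of admissible scales. Averaging over $s \in [0,1]$ then replaces $\sum_{k\in\Z}$ by $\int_\R dk$, while the trivial $y$-integral contributes the factor $|\mathcal{B}_R|$. Finally, the $\eta$-average over $\mathcal{B}_R$ combined with the normalization $R^{-2|\alpha|}$ turns the sum over $\ell$ into an integral over a single $r$-semi-tile of $\R^n$; after the change of variables $v = \dil{(\eta - c_{k,\ell})}{2^k}$ and taking $R \to \infty$, I arrive at $\widehat{\mathbf{A}^{r,m}f}(\zeta) = \theta_r(\zeta) m(\zeta) \widehat{f}(\zeta)$ with the explicit formula
\[ \theta_r(\zeta) = C \int_\R \int_{U_r} |\widehat{\phi}(\dil{\zeta}{2^k} + v)|^2\, dv\, dk, \]
where $U_r = \prod_i U_{r_i}$, $U_0 = [-1/2,0]$, $U_1 = [0,1/2]$, and $C > 0$ is universal. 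The formula makes it evident that $\theta_r$ is independent of $m$; the substitution $k \mapsto k + \log_2 \lambda$ yields the anisotropic invariance $\theta_r(\dil{\zeta}{\lambda}) = \theta_r(\zeta)$; and smoothness of $|\widehat{\phi}|^2$ together with the compact $v$-domain gives smoothness of $\theta_r$ away from the origin. Combined with boundedness of $m$ and Schwartz regularity of $\widehat{f}$, this shows $\mathbf{A}^{r,m}f$ is a test function.

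It remains to locate the cones $\Theta_r$ and verify the covering. The integrand for $\theta_r$ is nonnegative and strictly positive on a set of positive $(k,v)$-measure precisely when there exists $k \in \R$ with $\dil{\zeta}{2^k} \in -U_r + \operatorname{supp}\widehat{\phi}$. For $r \neq 0$ this places the $i$th coordinate of $\zeta$ in a region whose sign matches $+$ when $r_i = 0$ and $-$ when $r_i = 1$, with anisotropically compatible magnitudes. By continuity of $\theta_r$ and the established homogeneity, $\theta_r$ exceeds a positive constant $\varepsilon_0$ on an open anisotropic cone $\Theta_r$ containing any such direction. The covering reduces to a simple case analysis: every $\zeta \in (-\infty,\varepsilon_0]^n$ admits some index $i$ with $\zeta_i \le \varepsilon_0$, and choosing $r$ with $r_i = 1$ for every such index places $\zeta$ inside the corresponding $\Theta_r$. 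The main technical hurdle is the Poisson summation step, where one must carefully justify the distributional collapse of the discrete tile sum to a diagonal Fourier multiplier; the subsequent cone construction and covering argument are largely routine bookkeeping.
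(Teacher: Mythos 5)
Your approach mirrors the paper's: Poisson-sum the spatial lattice, use homogeneity and scale-averaging to collapse to a Fourier multiplier, derive an explicit integral formula for $\theta_r$, and read off the cone structure. The formula you write is structurally correct, but there is a computational error in $U_r$. With $c_{k,\ell}$ the center of $\omega_{P(0)}$, your substitution $v = \dil{\eta - c_{k,\ell}}{2^k}$ combined with the constraint $\dil{\eta}{2^k} - \ell \in Q_r = \bigl[\tfrac12 r, \tfrac12(r+1)\bigr]$ gives $v \in Q_r - \tfrac14$, so $U_0 = [-\tfrac14,\tfrac14]$ and $U_1 = [\tfrac14,\tfrac34]$, not the half-intervals $[-\tfrac12,0]$ and $[0,\tfrac12]$ you state. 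This error propagates to your sign analysis: with the correct $U_r$ and $r_i = 0$, the condition $\dil{\zeta}{2^k}_i \in -U_0 + \mathrm{supp}\,\widehat{\phi}$ places the $i$th coordinate in (roughly) $[-\tfrac14,\tfrac14]$, i.e.\ either sign, rather than forcing $\zeta_i > 0$ as you claim. (Compare with the paper's base region $Q^{(1)} - Q_r$, which for $r_i = 0$ is a small interval centered near $0$ and for $r_i = 1$ is a small interval near $-\tfrac12$.)

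Your covering argument at the end is also too loosely stated to be correct. As written, ``choosing $r$ with $r_i = 1$ for every such index'' would always select $r = (1,\dots,1)$, since every coordinate of $\zeta \in (-\infty,\varepsilon_0]^n$ satisfies $\zeta_i \le \varepsilon_0$. But the single cone $\Theta_{(1,\dots,1)}$ has base approximately $[-\tfrac34,-\tfrac14]^n$ and its dilation orbit does not cover the whole closed negative orthant: for a given $\zeta$ one must first choose a dilation parameter $t$ so that $\dil{\zeta}{t}$ lands in the union of base regions, and then set $r_i = 1$ or $r_i = 0$ coordinate by coordinate according to whether $\dil{\zeta}{t}_i$ lies near $-\tfrac12$ or near $0$, arranging for at least one coordinate to be of the former type so that $r \ne 0$. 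The rest of your computation (Poisson collapse, disappearance of $y$, conversion of the scale sum to an integral, invariance and smoothness of $\theta_r$) is consistent with the paper's.
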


\begin{proof}
By expanding definitions we see that
\[ (\mathrm{M}_{-\eta}\mathrm{T}_{-y}\mathrm{D}^2_{2^{-s}} A^{r,m}_{2^{-s}\eta} \mathrm{D}^2_{2^s} \mathrm{T}_y \mathrm{M}_\eta f)^\wedge (\xi) \]
is equal to a universal constant times
\begin{align*}
m(\xi) \sum_{P\in\overline{\mathcal{P}}} & \langle \widehat{f}, \mathrm{T}_{-\eta+\dil{c(\omega_{P(0)})}{2^s}} \mathrm{M}_{y-\dil{c(I_P)}{2^{-s}}} \mathrm{D}^2_{2^{s-k_P}} \widehat{\phi} \rangle\\
& \times \mathrm{T}_{-\eta+\dil{c(\omega_{P(0)})}{2^s}} \mathrm{M}_{y-\dil{c(I_P)}{2^{-s}}} \mathrm{D}^2_{2^{s-k_P}} \widehat{\phi}(\xi) \mathbf{1}_{\omega_{P(r)}}(\dil{\eta}{2^{-s}}),
\end{align*}
where we have used that $m(\dil{\xi}{2^{-s}})=m(\xi)$. The previous display equals
\begin{align*}
m(\xi)\sum_{k\in\Z} \sum_{\ell\in\Z^n} \sum_{u\in\Z^n} 2^{-|\alpha|(s-k)} \int_{\R^n} & \widehat{f}(\zeta) e^{i(y-\dil{u+\frac12}{2^{-s+k}})(\xi-\zeta)} \overline{\widehat{\phi}} \Big(\dil{\zeta+\eta}{2^{-s+k}}-\Big(\ell+\frac14\Big)\Big) d\zeta\\
& \times \widehat{\phi} \Big(\dil{\xi+\eta}{2^{-s+k}}-\Big(\ell+\frac14\Big)\Big) \mathbf{1}_{\omega_{P(r)}}(\dil{\eta}{2^{-s}}).
\end{align*}
Applying the Poisson summation formula to the summation in $u$ and using the Fourier support information of the function $\phi$ we see that the previous display equals (up to a universal constant)
\begin{align*}
m(\xi)\widehat{f}(\xi) \sum_{k\in\Z} \sum_{\ell\in\Z^n} |\widehat{\phi}|^2 \Big(\dil{\xi+\eta}{2^{-s+k}}-\Big(\ell+\frac14\Big)\Big) \mathbf{1}_{\omega_{P(r)}}(\dil{\eta}{2^{-s}}).
\end{align*}
Observe that the expression no longer depends on the variable $y$.
It remains to compute the function $\theta_r(\xi)=c\cdot \lim_{R\to\infty} I_R(\xi)$, where $c$ is a universal constant and
\[ I_R(\xi) = \frac1{R^{|\alpha|}} \int_{\mathcal{B}_R} \int_0^1 \sum_{k\in\Z} \sum_{\ell\in\Z^n} |\widehat{\phi}|^2 \Big(\dil{\xi+\eta}{2^{-s+k}}-\Big(\ell+\frac14\Big)\Big) \mathbf{1}_{\omega_{P(r)}}(\dil{\eta}{2^{-s}}) ds d\eta.\]
Note the formula
\[ \int_0^1 \sum_{k\in\Z} F(2^{k-s}) ds = \frac1{\log 2} \int_0^\infty F(t) \frac{dt}t, \]
which follows from a change of variables $2^{k-s}\to t$. Using this we have
\[ I_R(\xi) = \frac{c}{R^{|\alpha|}} \int_{\mathcal{B}_R} \int_0^\infty \sum_{\ell\in\Z^n} |\widehat{\phi}|^2 \Big(\dil{\xi+\eta}{t}-\Big(\ell+\frac14\Big)\Big) \mathbf{1}_{Q_r}(\dil{\eta}{t}-\ell) \frac{dt}t d\eta,\]
where $Q_r=\Big[\frac12 r,\frac12 (r+1)\Big]=\prod_{i=1}^n\Big[\frac12 r_i, \frac12 (r_i+1)\Big]$ and $c=(\log 2)^{-1}$ ($c$ may change from line to line in this proof). To simplify our expression further we perform the change of variables
\[ \dil{\xi+\eta}{t}-\ell \to \zeta \]
in the integration in $\eta$. This yields
\begin{equation}\label{eqn:ac_reductioncalc}
I_R(\xi) = c\int_{\R^n} \int_0^\infty \chi(\zeta) \mathbf{1}_{Q_r}(\zeta-\dil{\xi}{t}) \left(\sum_{\ell\in\Z^n} \frac{\mathbf{1}_{\norm{\zeta+\ell-\dil{\xi}{t}}\le tR}}{(tR)^{|\alpha|}} \right)\frac{dt}{t} d\zeta
\end{equation}
where we have set
\[ \chi(\zeta) = |\widehat{\phi}|^2 \Big( \zeta - \frac14\Big). \]
Observe that the integrand in \eqref{eqn:ac_reductioncalc} is supported in a compact subset of $\R^n\times (0,\infty)$ (which depends on $\xi$).
By counting the $\ell$ for which the summand is non-zero we see that for every fixed $\zeta, \xi\in\R^n$ and $t>0$ the sum
\[ \sum_{\ell\in\Z^n} \frac{\mathbf{1}_{\norm{\zeta+\ell-\dil{\xi}{t}}\le tR}}{(tR)^{|\alpha|}} \]
converges to a universal constant as $R\to\infty$. Thus, from Lebesgue's dominated convergence theorem we conclude that
\begin{equation}\label{eqn:ac_theta}
\theta_r(\xi) = c \int_{\R^n} \int_0^\infty \chi(\zeta) \mathbf{1}_{Q_r}(\zeta-\dil{\xi}{t})\frac{dt}{t} d\zeta.
\end{equation}
Evidently we have $\theta_r(\dil{\xi}{t})=\theta_r(\xi)$ for every $t>0$ and $\xi\in\R^n$. From our choice of $\phi$ we get that $\chi$ is supported on $Q^{(0)}$ and equal to one on $Q^{(1)}$, where
\[ Q^{(j)} = \Big[\frac14 - \frac{b_j}2,\frac14 + \frac{b_j}2\Big] \] 
for $j=0,1$. Let us set
\[ \Theta_r = \{ \dil{\xi}{t}\,:\, \xi\in Q^{(1)}-Q_r \}. \]
Then we can read off \eqref{eqn:ac_theta} that $\theta_r$ is greater than some positive constant on $\Theta^{(1)}_r$. Note that
\[ Q^{(j)}-Q_r = \Big[-\frac12r - \Big(\frac14 + \frac{b_j}2\Big), -\frac12r + \Big(\frac14 + \frac{b_j}2\Big)\Big].\]
Looking at the anisotropic cone generated by each of the regions $Q^{(1)}-Q_r$ we see that \eqref{eqn:ac_conescovering} is satisfied for sufficiently small $\varepsilon_0$.
\end{proof}

In the isotropic case $\alpha=(1,\dots,1)$ we can assume without loss of generality that the multiplier $m$ is supported in some arbitrarily chosen cone. Due to the lack of rotation invariance this assumption becomes invalid in the anisotropic setting. 

\begin{proof}[Proof of Theorem \ref{thm:ac_anisocarleson}]
Let $m\in\mathscr{M}^{\nu_0}$. Without loss of generality we may assume that $m$ is supported in the ``quadrant'' $(-\infty,0]^n$. By \eqref{eqn:ac_conescovering} we can choose smooth functions $(\varrho_r)_r$ such that $\varrho_r$ is supported in $\Theta_r$ and
\[ \sum_{r\in\{0,1\}^n\setminus\{0\}} \varrho_r(\xi) = 1 \]
for $\xi\in (-\infty,0]^n$. By the triangle inequality and Lemma \ref{lemma:ac_cones}, we have
\[ \|\mathscr{C}_m f\|_{2,\infty} \le \sum_{r\in\{0,1\}^n\setminus\{0\}} \| \sup_{N\in\R^n} |\mathbf{A}^{r,\theta_r^{-1}\varrho_r m} \mathrm{M}_{N} f| \|_{2,\infty}. \]
Here $\theta_r^{-1}$ refers to the function $\xi\mapsto (\theta_r(\xi))^{-1}$, which is bounded on $\Theta_r$. By \eqref{eqn:ac_average} and Minkowski's integral inequality, the previous is no greater than
\[ \sum_{r\in\{0,1\}^n\setminus\{0\}} \limsup_{R\to\infty} \frac1{R^{2|\alpha|}} \int_{\mathcal{B}_R}\int_{\mathcal{B}_R} \int_0^1 \| \sup_{N\in\R^n} |A_N^{r,\theta_r^{-1}\varrho_r m} \mathrm{D}^2_{2^s} \mathrm{T}_y \mathrm{M}_\eta f | \|_{2,\infty} ds dy d\eta, \]
which by Theorem \ref{thm:ac_anisocmodel} is bounded by
\[ C \sum_{r\in\{0,1\}^n\setminus\{0\}} \| \theta_r^{-1}\varrho_r m\|_{\mathscr{M}^{\nu_0}} \|f\|_2\lesssim \|m\|_{\mathscr{M}^{\nu_0}} \|f\|_2. \]
\end{proof}

\section{Boundedness of the model operator}\label{sec:ac_modelform}
In this section we describe the proof of Theorem \ref{thm:ac_anisocmodel}. We follow \cite{LT00}. First, we perform some preliminary reductions. Given a measurable function $N:\R^n\to \R^n$ we define
\[ Tf(x) = A^r_{N(x)} f(x). \]
Note that the estimate \eqref{eqn:anisocmodelbound} is equivalent to showing
\[ \|T f\|_{2,\infty} \le C \|m\|_{\mathscr{M}^{\nu_0}} \|f\|_2 \]
with $C$ not depending on the choice of the measurable function $N$. By duality, it is equivalent to show
\[ |\langle Tf, \mathbf{1}_E \rangle| \lesssim \|m\|_{\mathscr{M}^{\nu_0}} |E|^{\frac12} \|f\|_2, \]
where $E$ is an arbitrary measurable set. 
By scaling, we may assume without loss of generality that $\|f\|_2=1$ and $|E|\le 1$.
Thus, by the triangle inequality, it suffices to show that
\begin{equation}\label{eqn:ac_keymodelest}
\sum_{P\in\mathcal{P}} |\langle f,\phi_P\rangle \langle \mathbf{1}_{E\cap N^{-1}(\omega_{P(r)})}, \psi^{N(\cdot)}_P\rangle| \lesssim \|m\|_{\mathscr{M}^{\nu_0}}, 
\end{equation}
for all finite sets of tiles $\mathcal{P}\subset\overline{\mathcal{P}}$, with the implied constant being independent of $f,E,N,\mathcal{P}$.
Throughout this and the following sections we fix $r\in\{0,1\}^n\setminus\{0\}$.
Before we continue we need to introduce certain collections of tiles called trees. There is a partial order on tiles defined by
\[ P\le P'\quad\quad\text{if}\quad\quad I_P\subset
I_{P'}\quad\text{and}\quad c(\omega_{P'})\in \omega_P. \]
Observe that two tiles are comparable with respect to $\le$ if and only if they have a non-empty intersection.
\begin{definition}\label{def:ac_tree}
A finite collection $\mathbf{T}\subset\overline{\mathcal{P}}$ of tiles is called a \emph{tree} if there exists $P\in\mathbf{T}$ such that $P'\le P$ for every $P'\in\mathbf{T}$. In that case, $P$ is uniquely determined and referred to as the \emph{top} of the tree $\mathbf{T}$. We denote the top of a tree $\mathbf{T}$ by $P_\mathbf{T}=I_\mathbf{T}\times \omega_\mathbf{T}$ and write $k_{\mathbf{T}}=|I_\mathbf{T}|^{1/|\alpha|}$.\\
A tree $\mathbf{T}$ is called a \emph{$1$--tree} if $c(\omega_\mathbf{T})\not\in \omega_{P(r)}$ for all $P\in\mathbf{T}$ and it is called a \emph{$2$--tree} if $c(\omega_\mathbf{T})\in \omega_{P(r)}$ for all $P\in\mathbf{T}$. These names are due to historical reasons (see \cite{LT00}).
\end{definition}
The notion of a tree was first introduced in this context by C. Fefferman \cite{Fef73}.
For a tile $P\in\overline{\mathcal{P}}$ we write 
\[E_{P}=E\cap N^{-1}(\omega_{P})\text{ and } E_{P(r)}=E\cap N^{-1}(\omega_{P(r)}).\]
The \emph{mass} of a single tile $P$ is defined as
\begin{align}
\label{eqn:ac_massdefsingletile}
\mathcal{M}(P)=\sup_{P^\prime\ge P} \int_{E_{P^\prime}}
w^{\nu_1}_{P^\prime}(x)dx,
\end{align}
where $\nu_1$ is a fixed large positive number depending only on $|\alpha|$ that is to be determined later and \[ w^{\nu}_P(x) = \mathrm{T}_{c(I_P)} \mathrm{D}^1_{2^{k(P)}} w^{\nu}(x),\]
where the weight $w^{\nu}$ takes the form
\[w^{\nu}(x)=(1+\norm{x})^{-\nu}. \]
For convenience we also write $w_P=w^{\nu_1}_P$. For a collection of tiles $\mathcal{P}\subset\mathcal{\overline{P}}$ we define
their mass as
\begin{align}
\label{eqn:ac_massdef}
\mathcal{M}(\mathcal{P})=\sup_{P\in\mathcal{P}}
\mathcal{M}(P).
\end{align}
The \emph{energy} of a collection of tiles $\mathcal{P}$ is defined as
\begin{align}
\label{eqn:ac_energydef}
\mathcal{E}(\mathcal{P})=\sup_{\mathbf{T}\subset\mathcal{P}\,\,2\mathrm{-tree}}
\left(\frac{1}{|I_T|} \sum_{P\in \mathbf{T}} |\langle f,\phi_P\rangle|^2\right)^{1/2}.
\end{align}
These quantities and the following lemmas originate in \cite{LT00}. 

\begin{lemma}[Mass lemma]\label{lemma:ac_mass} If $\nu_1>|\alpha|+1$, then there exists $C>0$ depending only on $\alpha$ such that for every finite set of tiles $\mathcal{P}\subset\overline{\mathcal{P}}$ there is a decomposition $\mathcal{P}=\mathcal{P}_{\mathrm{light}}\cup\mathcal{P}_{\mathrm{heavy}}$ such that
\begin{align}
\label{eqn:ac_masslemma1}
\mathcal{M}(\mathcal{P}_{\mathrm{light}})\le 2^{-2}\,\mathcal{M}(\mathcal{P})
\end{align}
and $\mathcal{P}_{\mathrm{heavy}}$ is a union of a set $\mathcal{T}$ of trees such that
\begin{align}
\label{eqn:ac_masslemma2}
\sum_{\mathbf{T}\in\mathcal{T}} |I_{\mathbf{T}}| \le \frac{C}{\mathcal{M}(\mathcal{P})}.
\end{align}
\end{lemma}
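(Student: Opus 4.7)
The plan is to follow the selection scheme of Lacey--Thiele \cite{LT00}, with the isotropic Hardy--Littlewood maximal function replaced by the anisotropic maximal function $M_\alpha$ associated with the balls $\mathcal{B}_r$; since $(\R^n,\rho,dx)$ is a space of homogeneous type, $M_\alpha$ satisfies the usual weak $(1,1)$ bound.

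First I would set $\mu:=\mathcal{M}(\mathcal{P})$ and define $\mathcal{P}_{\mathrm{heavy}}:=\{P\in\mathcal{P}:\mathcal{M}(P)>\mu/4\}$, $\mathcal{P}_{\mathrm{light}}:=\mathcal{P}\setminus\mathcal{P}_{\mathrm{heavy}}$, so that \eqref{eqn:ac_masslemma1} is immediate. For each $P\in\mathcal{P}_{\mathrm{heavy}}$ the definition \eqref{eqn:ac_massdefsingletile} produces a witness $P^\sharp\geq P$ with $\int_{E_{P^\sharp}}w_{P^\sharp}^{\nu_1}\,dx>\mu/4$. Because $\nu_1>|\alpha|$ forces $\int_E w_{P^\sharp}^{\nu_1}\to 0$ as $|I_{P^\sharp}|\to\infty$, the set of such candidate tiles is finite, so one may select $P^\sharp$ maximal with respect to $\leq$. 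Let $\mathcal{Q}$ be the resulting (finite) collection of distinct maximal witnesses, and set
\[\mathbf{T}_Q:=\{P\in\mathcal{P}_{\mathrm{heavy}}:P\leq Q\}\cup\{Q\},\qquad Q\in\mathcal{Q},\]
a tree with top $Q$; by construction $\bigcup_{Q\in\mathcal{Q}}\mathbf{T}_Q\supseteq\mathcal{P}_{\mathrm{heavy}}$.

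It remains to prove $\sum_{Q\in\mathcal{Q}}|I_Q|\lesssim 1/\mu$. The main analytic input is the anisotropic annular estimate
\[\int_E w_Q^{\nu_1}(x)\,dx\;\lesssim\;M_\alpha\mathbf{1}_E(y)\quad\text{for every } y\in I_Q,\]
which needs $\nu_1>|\alpha|+1$ and follows from splitting $\R^n$ into dyadic annuli around $c(I_Q)$ with respect to $\rho$. Combined with the witness bound, this shows $I_Q\subset\Omega:=\{M_\alpha\mathbf{1}_E>c_0\mu\}$ for some $c_0>0$, whence $|\Omega|\lesssim|E|/\mu\leq 1/\mu$ by weak $(1,1)$. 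Covering $\Omega$ by its maximal anisotropic dyadic subcubes $\{J\}\subset\mathcal{D}^\alpha$, the estimate reduces to the Carleson-type packing inequality $\sum_{Q\in\mathcal{Q},\,I_Q\subseteq J}|I_Q|\lesssim|J|$ for each such $J$. The decisive combinatorial input is that any two distinct $Q,Q'\in\mathcal{Q}$ are incomparable, so (when $|I_Q|<|I_{Q'}|$) either $I_Q\cap I_{Q'}=\emptyset$ or $\omega_Q\cap\omega_{Q'}=\emptyset$, and this trade-off between spatial and spectral disjointness can be exploited by a stopping-time argument on $J$ essentially as in \cite{LT00}.

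The main obstacle is this Carleson packing step: tree tops in $\mathcal{Q}$ may have substantial spatial overlap as long as their frequency parts are disjoint, so the naive bound $|\bigcup_Q I_Q|\leq|\Omega|$ is too weak. Carefully tracking the trade-off between spatial and frequency overlap via incomparability is the heart of the selection lemma, and the transfer to the anisotropic setting goes through provided one systematically uses the dyadic lattice $\mathcal{D}^\alpha$, whose nesting and tiling properties across scales are ensured by the standing assumption that the $\alpha_i$ are positive integers.
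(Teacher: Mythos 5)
Your heavy/light split, witness selection, and passage to maximal witnesses match the paper's beginning, but the proposal has a genuine gap at exactly the point you flag as ``the main obstacle.'' The reduction to $M_\alpha\mathbf{1}_E$ and the set $\Omega=\{M_\alpha\mathbf{1}_E>c_0\mu\}$ discards the crucial structure: in the definition of mass, the witness bound reads $\int_{E_{P'}}w_{P'}>\mu/4$ with $E_{P'}=E\cap N^{-1}(\omega_{P'})$, and the factor $N^{-1}(\omega_{P'})$ is precisely what converts frequency disjointness of tiles into spatial disjointness of measurable sets. Once you coarsen $E_{P'}$ to $E$, you have only that each $I_Q$ lies in $\Omega$ with $|\Omega|\lesssim 1/\mu$; the Carleson packing inequality $\sum_{I_Q\subset J}|I_Q|\lesssim|J|$ does \emph{not} follow from incomparability alone, because nothing in what you retain prevents many $Q$'s with the same $I_Q=J$ but disjoint $\omega_Q$'s, each receiving a large value of $M_\alpha\mathbf{1}_E$. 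Invoking ``a stopping-time argument essentially as in \cite{LT00}'' at this point is circular: that is the step the lemma asks you to prove.

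The paper closes the gap differently: after splitting the weight into anisotropic annuli $\dil{I_{P'}}{2^j}\setminus\dil{I_{P'}}{2^{j-1}}$, it runs a Vitali-type greedy selection on the \emph{enlarged} tiles $\dil{I_{P'}}{2^j}\times\omega_{P'}$. For the selected $P'$ the enlarged tiles are pairwise disjoint, so either the spatial parts $\dil{I_{P'}}{2^j}$ are disjoint or the $\omega_{P'}$ are; in either case the sets $E\cap N^{-1}(\omega_{P'})\cap\dil{I_{P'}}{2^j}$ are pairwise disjoint (notice the role of $N^{-1}$), and their measures sum to at most $|E|\le 1$. Unselected tops associated to the same selected $P'$ have pairwise disjoint $I_P$ (again by maximality plus $\omega_{P'}\subset\omega_P\cap\omega_Q$), contributing a factor $2^{(j+2)|\alpha|}|I_{P'}|$. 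The requirement $\nu_1>|\alpha|+1$ then appears when summing the geometric factor $2^{-(\nu_1-1)j}\cdot 2^{(j+2)|\alpha|}$ over $j\ge 0$. Your proposal never produces a reason for that exponent threshold, which is a symptom of the missing counting argument. To repair the proof, replace the bound $\int_E w_Q\lesssim M_\alpha\mathbf{1}_E(y)$ with the witness bound on $E_{P'}$ directly, and do the Vitali selection on the enlarged tiles so that the disjointness of the sets $N^{-1}(\omega_{P'})\cap\dil{I_{P'}}{2^j}$ is what gives the $1/\mu$ bound, not the weak $(1,1)$ inequality for $M_\alpha$.
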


\begin{lemma}[Energy lemma]\label{lemma:ac_energy}
There exists $C>0$ depending only on $\alpha$ such that for every finite set of tiles $\mathcal{P}\subset\overline{\mathcal{P}}$ there is a decomposition $\mathcal{P}=\mathcal{P}_{\mathrm{low}}\cup\mathcal{P}_{\mathrm{high}}$ such that
\begin{align}
\label{eqn:ac_energylemma1}
\mathcal{E}(\mathcal{P}_{\mathrm{low}})\le 2^{-1}\,\mathcal{E}(\mathcal{P}) \end{align}
and $\mathcal{P}_{\mathrm{high}}$ is a union of a set $\mathcal{T}$ of trees such that
\begin{align}
\label{eqn:ac_energylemma2}
\sum_{\mathbf{T}\in\mathcal{T}} |I_\mathbf{T}| \le \frac{C}{\mathcal{E}(\mathcal{P})^2}.
\end{align}
\end{lemma}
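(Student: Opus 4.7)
The plan is to adapt the iterative tree-selection argument of Lacey--Thiele \cite{LT00} to the anisotropic grid. Assume $\mathcal{E} := \mathcal{E}(\mathcal{P}) > 0$ (otherwise take $\mathcal{P}_{\mathrm{low}} = \mathcal{P}$). Initialize $\mathcal{P}_{\mathrm{stock}} := \mathcal{P}$ and $\mathcal{T} := \emptyset$. So long as $\mathcal{P}_{\mathrm{stock}}$ contains a $2$-tree $\mathbf{T}$ with $\sum_{P \in \mathbf{T}} |\langle f, \phi_P\rangle|^2 > (\mathcal{E}/2)^2 |I_{\mathbf{T}}|$, select one whose top frequency center $c(\omega_{\mathbf{T}})$ is extremal along some axis $i$ for which $r_i = 1$ (say, maximizing the $i$-th coordinate), maximally enlarge it inside $\mathcal{P}_{\mathrm{stock}}$ keeping the same top, transfer it to $\mathcal{T}$, and remove its tiles from $\mathcal{P}_{\mathrm{stock}}$. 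Finiteness of $\mathcal{P}$ makes this terminate. Setting $\mathcal{P}_{\mathrm{low}} := \mathcal{P}_{\mathrm{stock}}$ and $\mathcal{P}_{\mathrm{high}} := \mathcal{P} \setminus \mathcal{P}_{\mathrm{low}}$, the stopping criterion immediately gives \eqref{eqn:ac_energylemma1}.

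It then remains to prove $\sum_{\mathbf{T} \in \mathcal{T}} |I_{\mathbf{T}}| \lesssim \mathcal{E}^{-2}$. Summing the selection threshold,
\begin{equation*}
\frac{\mathcal{E}^2}{4} \sum_{\mathbf{T} \in \mathcal{T}} |I_{\mathbf{T}}| \le \sum_{\mathbf{T} \in \mathcal{T}} \sum_{P \in \mathbf{T}} |\langle f, \phi_P\rangle|^2,
\end{equation*}
so it suffices to bound the right side by a constant times $\|f\|_2^2$. I would establish this via an almost-orthogonality $TT^*$ argument: for each $P$ belonging to a selected tree $\mathbf{T}$, introduce an auxiliary wave packet $\widetilde{\phi}_P := \mathrm{M}_{c(\omega_{\mathbf{T}}) - c(\omega_{P(0)})} \phi_P$, so that $\widehat{\widetilde{\phi}_P}$ is concentrated in a small anisotropic cube around the common point $c(\omega_{\mathbf{T}})$, of sidelengths $\sim 2^{-k_P \alpha_i}$. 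Expanding
\begin{equation*}
\sum_{\mathbf{T}, \mathbf{T}'} \sum_{P, P'} \langle f, \phi_P\rangle \overline{\langle f, \phi_{P'}\rangle} \langle \phi_P, \phi_{P'}\rangle \lesssim \sum_{\mathbf{T}} \sum_{P \in \mathbf{T}} |\langle f, \phi_P\rangle|^2
\end{equation*}
by Schur's test self-improves to the desired bound by Cauchy--Schwarz.

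The needed decay of $|\langle \phi_P,\phi_{P'}\rangle|$ has two regimes. For $P, P'$ in the same selected tree, $\widetilde{\phi}_P$ and $\widetilde{\phi}_{P'}$ share a common frequency center and the inner products decay rapidly in $|k_P-k_{P'}|$ and in $\mathrm{dist}_\alpha(c(I_P),c(I_{P'}))$ suitably normalized, by the Schwartz decay of $\phi$ expressed in the anisotropic norm $\norm{\cdot}$. For $P, P'$ in distinct selected trees $\mathbf{T}, \mathbf{T}'$, one wants genuine frequency-disjointness of $\widehat{\widetilde{\phi}_P}$ and $\widehat{\widetilde{\phi}_{P'}}$; if this disjointness failed then the $2$-tree relations $c(\omega_{\mathbf{T}}) \in \omega_{P(r)}$ and $c(\omega_{\mathbf{T}'}) \in \omega_{P'(r)}$, together with the nesting structure of $\mathcal{D}^\alpha$, would force $P'$ to be attachable to $\mathbf{T}$ (or vice versa), contradicting the maximality of whichever tree was selected first under the extremality rule.

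The main obstacle is the cross-tree disjointness step. In the one-dimensional isotropic setting of \cite{LT00} it is immediate from the linear order on frequency intervals, but here the intervals $\omega_P$ live in the multi-parameter grid $\mathcal{D}^\alpha$ and the extremal-coordinate rule only yields a partial order on tops. The argument must be carried out componentwise using the chosen axis $i$ with $r_i=1$, exploiting that along this direction the $\alpha_i$-dyadic structure of $\omega_P$ interacts correctly with the semi-tile $\omega_{P(r)}$ singled out by $r$. Once this combinatorial input is in place, the Bessel/Schur synthesis proceeds exactly as in \cite{LT00,PT03}, with the only modifications being the use of the anisotropic norm in the Schwartz estimates on $\phi$.
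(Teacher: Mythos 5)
Your proposal follows the same skeleton as the paper's proof: iteratively select $2$-trees meeting the threshold $\mathcal{E}(\mathcal{P})/2$, break ties by extremality of the top frequency center along a coordinate $i_0$ with $r_{i_0}=1$, maximally enlarge each selection to a full tree with the same top, and then close with a Bessel--Schur $TT^*$ argument. However, two of the details you sketch are not correct as stated.

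First, the tie-breaking rule. You propose to \emph{maximize} the $i_0$-th coordinate of $c(\omega_{\mathbf{T}})$. Because $\omega_{P(0)}$ sits at the lower corner of $\omega_P$ and $\omega_{P(r)}$ is the quadrant shifted up by $\tfrac12 r$, one has $\pi_{i_0}(\xi)<\pi_{i_0}(\eta)$ for $\xi\in\omega_{P(0)}$, $\eta\in\omega_{P(r)}$. The correct rule is therefore to \emph{minimize}, and the direction is essential. Suppose $P\in\mathbf{T}_2$, $P'\in\mathbf{T}'_2$ and $\omega_P\subset\omega_{P'(0)}$. Then $c(\omega_{\mathbf{T}_2})\in\omega_P\subset\omega_{P'(0)}$ while $c(\omega_{\mathbf{T}'_2})\in\omega_{P'(r)}$, so $\mathbf{T}_2$ has the smaller $\pi_{i_0}$-value. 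Under the min-rule $\mathbf{T}_2$ was chosen first; since $c(\omega_{\mathbf{T}_2})\in\omega_{P'}$ and $|I_{P'}|<|I_P|\le|I_{\mathbf{T}_2}|$, the tile $P'$ would have been absorbed into the maximal tree with top $P_{\mathbf{T}_2}$ had $I_{P'}$ met $I_{\mathbf{T}_2}$, giving spatial disjointness. Under the max-rule $\mathbf{T}'_2$ is chosen first, but $c(\omega_{\mathbf{T}'_2})\in\omega_{P'(r)}$, which is disjoint from $\omega_{P'(0)}\supset\omega_P$, so $c(\omega_{\mathbf{T}'_2})\notin\omega_P$ and $P$ cannot be absorbed into $\mathbf{T}'$; the contradiction you invoke does not materialize.

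Second, and more importantly, what you want from the cross-tree case is the wrong kind of disjointness. You write that for $P,P'$ in distinct selected trees ``one wants genuine frequency-disjointness of $\widehat{\widetilde{\phi}_P}$ and $\widehat{\widetilde{\phi}_{P'}}$.'' This does not hold and is not needed. Whenever $\omega_{P(0)}\cap\omega_{P'(0)}\neq\emptyset$ the inner product $\langle\phi_P,\phi_{P'}\rangle$ generally does not vanish, regardless of which trees $P,P'$ belong to; moreover, your auxiliary modulated packets $\widetilde{\phi}_P$ carry a tree-dependent modulation, so $\langle\widetilde{\phi}_P,\widetilde{\phi}_{P'}\rangle$ is not a simple phase times $\langle\phi_P,\phi_{P'}\rangle$ across trees, which undermines the proposed $TT^*$ bookkeeping. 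The ingredient the argument actually requires (and which the paper proves as a separate geometric lemma) is \emph{spatial} disjointness: if $P\in\mathbf{T}_2$, $P'\in\mathbf{T}'_2$ with $\omega_P\subset\omega_{P'(0)}$ and $\mathbf{T}_2\neq\mathbf{T}'_2$, then $I_{P'}\cap I_{\mathbf{T}_2}=\emptyset$, and the various $I_{P'}$ for $P'$ of this type are pairwise disjoint. That spatial separation, combined with the singleton energy bound $|\langle f,\phi_{P'}\rangle|\le\varepsilon|I_{P'}|^{1/2}$ and the rapid decay in $\mathrm{dist}_\alpha$ of $|\langle\phi_P,\phi_{P'}\rangle|$, is what makes the off-diagonal sum converge with the right bound $\varepsilon^2|I_{\mathbf{T}_2}|$. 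Your sketch identifies the correct obstacle but misidentifies the resolution.

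Finally, the diagonal case $\omega_P=\omega_{P'}$ also needs to be handled separately, using that $I_P\cap I_{P'}=\emptyset$ for distinct such tiles. Your proposal collapses it into the general Schur estimate without noting this.
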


\begin{lemma}[Tree estimate]\label{lemma:ac_tree}
There exists $C>0$ depending only on $\alpha$ such that if $m\in\mathscr{M}^{\nu_0}$, then the following inequality holds for every tree $\mathbf{T}$:
\begin{align}\label{eqn:ac_treeest}
\sum_{P\in \mathbf{T}} |\langle f,\phi_P\rangle\langle \psi_P^{N(\cdot)}, \mathbf{1}_{E_{P(r)}}
\rangle| \le C \|m\|_{\mathscr{M}^{\nu_0}}
|I_\mathbf{T}|\mathcal{E}(\mathbf{T})\mathcal{M}(\mathbf{T})
\end{align}
\end{lemma}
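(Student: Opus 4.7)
The plan is to split the tree $\mathbf{T}$ into the 2-tree $\mathbf{T}_2 = \{P \in \mathbf{T} : c(\omega_\mathbf{T}) \in \omega_{P(r)}\}$ and its complement $\mathbf{T}_1 = \mathbf{T}\setminus \mathbf{T}_2$, which by Definition \ref{def:ac_tree} is a 1-tree, and to estimate each contribution separately. A common preliminary for both parts is a pointwise bound of the form
\[ |\psi_P^N(x)| \lesssim \|m\|_{\mathscr{M}^{\nu_0}}\, |I_P|^{1/2}\, w_P^{\nu}(x), \quad N \in \omega_P, \]
for a suitably large $\nu$ depending on $\nu_0$. This is obtained by repeated anisotropic integration by parts in the Fourier representation $\psi_P^N(x) = \int e^{ix\xi} m(\xi-N)\widehat{\phi_P}(\xi)d\xi$, crucially exploiting the anisotropic homogeneity of $m$, which upgrades $L^\infty$-bounds on the anisotropic unit sphere into symbol-type estimates $|\partial^\beta m(\xi-N)| \lesssim \norm{\xi-N}^{-\alpha\cdot\beta}\|m\|_{\mathscr{M}^{|\beta|}}$ on the support of $\widehat{\phi_P}$.

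For $\mathbf{T}_2$ I would apply Cauchy-Schwarz in $P$:
\[ \sum_{P \in \mathbf{T}_2} |\langle f, \phi_P \rangle|\,|\langle \psi_P^{N(\cdot)}, \mathbf{1}_{E_{P(r)}} \rangle| \le \Bigl(\sum_{P} |\langle f,\phi_P\rangle|^2\Bigr)^{1/2}\Bigl(\sum_{P} |\langle \psi_P^{N(\cdot)}, \mathbf{1}_{E_{P(r)}}\rangle|^2\Bigr)^{1/2}. \]
The first factor is at most $|I_\mathbf{T}|^{1/2}\mathcal{E}(\mathbf{T})$ directly by the definition of energy, since $\mathbf{T}_2$ is a 2-tree. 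For the second factor, the pointwise bound yields $|\langle \psi_P^{N(\cdot)},\mathbf{1}_{E_{P(r)}}\rangle| \lesssim \|m\|_{\mathscr{M}^{\nu_0}} |I_P|^{1/2}\int_{E_{P(r)}}w_P$; squaring, pulling out one factor of $\int_{E_{P(r)}}w_P \le \mathcal{M}(\mathbf{T})$, and swapping the sum with the integral reduces the task to controlling $\int\mathbf{1}_E(x)\sum_{P\in\mathbf{T}_2} |I_P| w_P(x)\mathbf{1}_{\omega_{P(r)}}(N(x))\,dx$. In a 2-tree the $\omega_{P(r)}$ are nested, so the constraint $N(x)\in\omega_{P(r)}$ cuts out a consecutive range of scales, and at each scale the cubes $I_P$ form a dyadic partition of $I_\mathbf{T}$; telescoping over scales against the polynomial decay of $w_P$ yields the bound $\lesssim |I_\mathbf{T}|\mathcal{M}(\mathbf{T})$, and in total $\|m\|_{\mathscr{M}^{\nu_0}}|I_\mathbf{T}|\mathcal{E}(\mathbf{T})\mathcal{M}(\mathbf{T})$.

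For $\mathbf{T}_1$ the distinguishing structural fact is that $c(\omega_\mathbf{T})\not\in \omega_{P(r)}$, so $N(x)$ and $c(\omega_\mathbf{T})$ lie in different semi-tiles of $\omega_P$, which forces $\norm{N(x)-c(\omega_\mathbf{T})}\gtrsim 2^{-k_P}$ and produces extra oscillation in $\psi_P^{N(x)}$ relative to the 2-tree case. I plan to exploit this through a telescoping argument in scale: decompose $\sum_{P\in\mathbf{T}_1}\langle f,\phi_P\rangle \psi_P^{N(\cdot)}\mathbf{1}_{\omega_{P(r)}}(N(\cdot))$ as a martingale-type difference adapted to the dyadic shells determined by $\norm{N(x)-c(\omega_\mathbf{T})}^{-1}$, and use the quasi-orthogonality of the $\phi_P$ at different scales to control the pairing with $\mathbf{1}_E$ against $\mathcal{M}(\mathbf{T})$. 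Summing a geometric series in the shell parameter should again yield the bound $\|m\|_{\mathscr{M}^{\nu_0}}|I_\mathbf{T}|\mathcal{E}(\mathbf{T})\mathcal{M}(\mathbf{T})$.

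The main obstacle I anticipate is the 1-tree estimate in the presence of the $x$-dependent linearizing function $N(x)$ and the anisotropic geometry; specifically, the tail estimate when $x$ is far from $I_\mathbf{T}$ is delicate, because one must absorb the polynomial decay of $w_P^{\nu}$ simultaneously against $|I_\mathbf{T}|$, against the varying $N(x)$, and against the number of relevant scales. This is the step responsible for the hypothesis $\nu_0 \ge 3|\alpha|+2$ in Theorem \ref{thm:ac_anisocarleson}, and is precisely the tail estimate \eqref{eqn:tailest} flagged in the introduction. A secondary difficulty is that the lack of rotational symmetry in the anisotropic setting requires a careful case analysis of how frequency semi-tiles overlap under translation by $N(x)$, as the cubical/isotropic dyadic decompositions used in \cite{LT00,PT03} no longer interact as cleanly.
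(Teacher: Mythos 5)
The paper does not prove the tree estimate by applying Cauchy--Schwarz separately on a $1$--tree and a $2$--tree. It partitions space into the maximal anisotropic dyadic rectangles $J$ for which $\fam{J}$ contains no $I_P$ with $P\in\mathbf{T}$, writes the sum as $\mathcal{K}_1+\mathcal{K}_2$ according to whether $|I_P|\le|\parent{J}|$ or not, absorbs the tails ($\mathcal{K}_1$, where the requirement $\nu_0\ge 3|\alpha|+2$ arises), and for $\mathcal{K}_2$ proves the crucial support estimate $|G_J|\lesssim\mathcal{M}(\mathbf{T})|J|$ (Lemma~\ref{lemma:ac_GJest}) combined with a pointwise control of $F_J$ on $J$ (by $\varepsilon$ for $1$--trees via disjointness of the $E_{P(r)}$, and by a maximal function of $G*\mathrm{M}_{\xi_0}K$ and of $G$ for $2$--trees, via the function $h_x$ and the long estimate for the error kernel $R$). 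The factor $\mathcal{M}(\mathbf{T})$ enters once through the measure of $G_J$, while the $L^\infty$ bound on $F_J$ carries the factor $\mathcal{E}(\mathbf{T})$; this is the organizing mechanism in \cite{LT00}.

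Your $2$--tree argument has a genuine gap. After Cauchy--Schwarz and the pointwise wave--packet bound, you are left needing
\[
\int \mathbf{1}_E(x)\sum_{P\in\mathbf{T}_2}|I_P|\,w_P(x)\,\mathbf{1}_{\omega_{P(r)}}(N(x))\,dx \;\lesssim\; \mathcal{M}(\mathbf{T})\,|I_\mathbf{T}|.
\]
But the left side equals $\sum_{k\le k_\mathbf{T}}\sum_{k_P=k}|I_P|\int_{E_{P(r)}}w_P$. At each fixed scale $k$ the best one can say from the mass is $\sum_{k_P=k}|I_P|\int_{E_{P(r)}}w_P\le\mathcal{M}(\mathbf{T})\sum_{k_P=k}|I_P|\le\mathcal{M}(\mathbf{T})|I_\mathbf{T}|$; there is no decay in $k$ (the sets $E_{P(r)}$ are nested and \emph{grow} as $k$ decreases, and the per-scale weight sums $\sum_{k_P=k}|I_P|w_P(x)$ are of unit size on $I_\mathbf{T}$). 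Summing over the unboundedly many scales of a deep $2$--tree destroys the estimate. The ``telescoping against the polynomial decay of $w_P$'' you invoke does not rescue this: the decay is spatial, not in scale, and does not produce a geometric series in $k$. The paper never attempts such a per-tile mass accounting; the single factor of $\mathcal{M}(\mathbf{T})$ is obtained exactly once, from the support estimate $|G_J|\lesssim\mathcal{M}(\mathbf{T})|J|$, and is multiplied against an $L^\infty$-type bound on $F_J$.

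The remainder of the proposal is also too far from a proof. For the $1$--tree you gesture at a ``martingale-type difference adapted to dyadic shells'' and ``quasi-orthogonality of the $\phi_P$,'' but the paper's $1$--tree case is genuinely easy precisely because the $E_{P(r)}$ are \emph{disjoint} for different scales (that is what being a $1$--tree buys), giving the pointwise bound $|F_J|\lesssim\|m\|_{\mathscr{M}^{\nu_0}}\varepsilon$ directly; you neither state nor use this. More importantly, the entire $2$--tree machinery that the paper labels as ``the core of the proof'' --- the localizing function $h_x$, the decomposition $F_J(x)=G*\mathrm{M}_{\xi_0}K*h_x(x)+G*(\mathrm{M}_{N(x)}K-\mathrm{M}_{\xi_0}K)*h_x(x)$, the anisotropic maximal function Lemma~\ref{lemma:ac_max}, and the multi-regime estimates \eqref{eqn:ac_errestpfpt1_8}, \eqref{eqn:ac_Rpmest}, \eqref{eqn:ac_Rplusest2} for the error kernel $R$ --- is absent from your proposal and is not replaceable by the Cauchy--Schwarz step you describe. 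As written, the proposal would not close.
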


The proofs of these lemmas are contained in Sections \ref{sec:ac_mass}, \ref{sec:ac_energy}, \ref{sec:ac_tree}, respectively. By iterated application of these lemmas we obtain a proof of \eqref{eqn:ac_keymodelest}. This argument is literally the same as in \cite{LT00}, but we include it here for convenience of the reader.
Let $\mathcal{P}$ be a finite collection of tiles. We will decompose $\mathcal{P}$ into disjoint sets $(\mathcal{P}_\ell)_{\ell\in\mathcal{N}}$ (where $\mathcal{N}$ is some finite set of integers) such that each $\mathcal{P}_\ell$ satisfies
\begin{equation}\label{eqn:ac_inductionprop1}
\mathcal{M}(\mathcal{P}_\ell) \le 2^{2\ell}\quad\text{and}\quad
\mathcal{E}(\mathcal{P}_\ell)\le 2^\ell
\end{equation}
and is equal to the union of a set of trees $\mathcal{T}_\ell$ such that
\begin{equation}\label{eqn:ac_inductionprop2}
\sum_{\mathbf{T}\in\mathcal{T}_\ell} |I_\mathbf{T}| \le C 2^{-2\ell}.
\end{equation}
This is achieved by the following procedure:
\begin{enumerate}
\item[(1)] Initialize $\mathcal{P}^{\mathrm{stock}}:=\mathcal{P}$ and choose an initial $\ell$ that is large enough such that 
\begin{equation}\label{eqn:ac_inductionprop3}
\mathcal{M}(\mathcal{P}^{\mathrm{stock}}) \le 2^{2\ell}\quad\text{and}\quad\mathcal{E}(\mathcal{P}^{\mathrm{stock}})\le 2^\ell.
\end{equation}
\item[(2)] If $\mathcal{M}(\mathcal{P}^{\mathrm{stock}})>2^{2(\ell-1)}$, then apply Lemma \ref{lemma:ac_mass} to decompose $\mathcal{P}^{\mathrm{stock}}$ into $\mathcal{P}_{\mathrm{light}}$ and $\mathcal{P}_{\mathrm{heavy}}$. We add\footnote{We can think of all the $\mathcal{P}_\ell$ as being initialized by the empty set.} $\mathcal{P}_{\mathrm{heavy}}$ to $\mathcal{P}_\ell$ and update $\mathcal{P}^{\mathrm{stock}}:=\mathcal{P}_{\mathrm{light}}$ (thus, we now have $\mathcal{M}(\mathcal{P}^{\mathrm{stock}})\le 2^{2(\ell-1)}$).
\item[(3)] If $\mathcal{E}(\mathcal{P}^{\mathrm{stock}})>2^{\ell-1}$, then apply Lemma \ref{lemma:ac_energy} to decompose $\mathcal{P}^{\mathrm{stock}}$ into $\mathcal{P}_{\mathrm{low}}$ and $\mathcal{P}_{\mathrm{high}}$. We add $\mathcal{P}_{\mathrm{high}}$ to $\mathcal{P}_\ell$ and update $\mathcal{P}^{\mathrm{stock}}:=\mathcal{P}_{\mathrm{low}}$  (thus, we now have $\mathcal{E}(\mathcal{P}^{\mathrm{stock}})\le 2^{\ell-1}$).
\item[(4)] If $\mathcal{P}^{\mathrm{stock}}$ is not empty, then replace $\ell$ by $\ell-1$ and go to Step (2).
\end{enumerate}
Then we can finish the proof of \eqref{eqn:ac_keymodelest} by using \eqref{eqn:ac_inductionprop1}, \eqref{eqn:ac_inductionprop2}, \eqref{eqn:ac_treeest} and keeping in mind that we always have $\mathcal{M}(\mathcal{P})\le \|w^{\nu_1}\|_1$:
\begin{align*}
\sum_{P\in\mathcal{P}} & |\langle f,\phi_P\rangle \langle \mathbf{1}_{E\cap N^{-1}(\omega_{P(r)})}, \psi^{N(\cdot)}_P\rangle| = \sum_{\ell\in\mathcal{N}} \sum_{\mathbf{T}\in\mathcal{T}_\ell} \sum_{P\in\mathbf{T}} |\langle f,\phi_P\rangle \langle \mathbf{1}_{E\cap N^{-1}(\omega_{P(r)})}, \psi^{N(\cdot)}_P\rangle|\\
&\lesssim \|m\|_{\mathscr{M}^{\nu_0}} \sum_{\ell\in\mathcal{N}} 2^\ell \mathrm{min}(1,2^{2\ell}) \sum_{\mathbf{T}\in\mathcal{T}_\ell} |I_\mathbf{T}| \lesssim \|m\|_{\mathscr{M}^{\nu_0}}\sum_{\ell\in\Z} 2^{-\ell} \mathrm{min}(1,2^{2\ell})\lesssim \|m\|_{\mathscr{M}^{\nu_0}}.
\end{align*}

To conclude this section we collect several standard auxiliary estimates for $m,K,\phi_P,\psi_P^N$ which are used during the remainder of the proof. 
First, from the definition of $\mathscr{M}^{\nu}$ we have the symbol estimate
\begin{equation}\label{eqn:mderivest}
|\partial_i^\nu m(\xi)| \le \|m\|_{\mathscr{M}^{\nu}} \norm{\xi}^{-\nu\alpha_i}
\end{equation}
for every integer $\nu\le\nu_0$ and $i=1,\dots,n$. If we let $K$ denote the corresponding kernel (that is, $\widehat{K}=m$), we have
\begin{equation}\label{eqn:aniso_kernelest}
|K(x)| \lesssim \|m\|_{\mathscr{M}^{\lfloor\frac{|\alpha|}2\rfloor+1}} \norm{x}^{-|\alpha|}
\end{equation}
for $x\not=0$. This is a consequence of the anisotropic H\"ormander-Mikhlin theorem (see \cite{FR66}). For every integer $\nu\ge 0$ and $N\not\in\omega_{P(0)}$ we have
\begin{align}
\label{eqn:psiestimate}
|\psi_P^N(x)| &\lesssim \|m\|_{\mathscr{M}^{\nu}} |I_P|^{1/2} w_P^{\nu}(x),\end{align}
where the implicit constant depends only on $\nu, \alpha$ and the choice of $\phi$. We defer the proof of this estimate to Section \ref{sec:ac_wavepacketest}.

The next estimates concern the interaction of two wave packets associated with distinct tiles. Let $P,P'~\in~\overline{\mathcal{P}}$ be tiles. The idea is that if $P,P'$ are disjoint (or equivalently, incomparable with respect to $\le$) then their associated wave packets are almost orthogonal, i.e. $\langle \phi_P, \phi_{P'}\rangle$ is negligibly small. Indeed, if $\omega_{P}$ and $\omega_{P'}$ are disjoint, then we even have $\langle \phi_P, \phi_{P'}\rangle=0$. However, as an artifact of the Heisenberg uncertainty principle, in the case that only $I_P$ and $I_{P'}$ are disjoint, we need to deal with tails. The precise estimate we need is as follows. Assume that $|I_P|\ge |I_{P'}|$. Then for every integer $\nu\ge 0$ we have that
\begin{equation}
\label{eqn:tileinteraction}
|\langle \phi_P,\phi_{P'}\rangle|\lesssim |I_P|^{-\frac12} |I_{P'}|^{\frac12} (1+2^{-k_P}\norm{c(I_P)-c(I_{P'})})^{-\nu},
\end{equation}
where the implicit constant depends only on $\nu$ and $\phi$. See \cite[Lemma 2.1]{Thi06} for the version of this estimate for one-dimensional wave packets. Similarly, we have
\begin{equation}
\label{eqn:tileinteraction2}
|\langle \psi^N_P,\psi^N_{P'}\rangle|\lesssim \|m\|^2_{\mathscr{M}^{\nu}} |I_P|^{-\frac12} |I_{P'}|^{\frac12} (1+2^{-k_P}\norm{c(I_P)-c(I_{P'})})^{-\nu},
\end{equation}
for every integer $\nu\ge 0$ provided that $N\not\in \omega_{P(0)}\cup \omega_{P'(0)}$. We prove \eqref{eqn:tileinteraction} and \eqref{eqn:tileinteraction2} in Section \ref{sec:ac_wavepacketest}.

\section{Proof of the mass lemma}\label{sec:ac_mass}

In this section we prove Lemma \ref{lemma:ac_mass}. The proof is in essence the same as in \cite[Prop. 3.1]{LT00}. Let $\mathcal{P}$ be a finite set of tiles and set $\mu=\mathcal{M}(\mathcal{P})$. We define the set of heavy tiles by
$$\mathcal{P}_{\text{heavy}}=\left\{P\in\mathcal{P}\,:\,\mathcal{M}(P)>\frac\mu 4\right\}$$
and accordingly $\mathcal{P}_{\text{light}}=\mathcal{P}\backslash\mathcal{P}_{\text{heavy}}$. Then \eqref{eqn:ac_masslemma1} is automatically satisfied. It remains to show \eqref{eqn:ac_masslemma2}. By the definition of mass \eqref{eqn:ac_massdefsingletile} we know that for every $P\in\mathcal{P}_{\text{heavy}}$ there exists a $P^\prime=P^\prime(P)\in\overline{\mathcal{P}}$ with $P^\prime\ge P$ such that
\begin{align}
\label{eqn:ac_masslemmapf1}
\int_{E_{P^\prime}} w_{P^\prime}(x) dx>\frac{\mu}{4}
\end{align}
Note that $P^\prime$ need not be in $\mathcal{P}$. Let $\mathcal{P}^\prime$ be the maximal elements in 
$$\{P^\prime(P)\,:\,P\in\mathcal{P}_{\text{heavy}}\}$$ with respect to the partial order $\le$ of tiles. Then $\mathcal{P}_{\text{heavy}}$ is a union of trees with tops in $\mathcal{P}^\prime$. Therefore it suffices to show
\begin{align}
\label{eqn:ac_masslemmapf2}
\sum_{P\in\mathcal{P}^\prime} |I_{P}| \le \frac C\mu
\end{align}
First we rewrite \eqref{eqn:ac_masslemmapf1} as
\begin{align}
\label{eqn:ac_masslemmapf3}
\sum_{j=0}^\infty\int_{\substack{E_{P}\cap (\dil{I_{P}}{2^j}\backslash \dil{I_{P}}{2^{j-1}})}} w_{P}(x) dx>C \mu\sum_{j=0}^\infty 2^{-j}.
\end{align}
where we adopt the temporary convention that $\dil{I_{P}}{2^{-1}} =\emptyset$ and for $j\ge 0$,
\[ \dil{I_P}{2^j} = \prod_{i=1}^n \Big[c(I_P)_i-2^{(k_P+j)\alpha_i-1}, c(I_P)_i+2^{(k_P+j)\alpha_i-1}\Big). \]
Thus, for every $P\in\mathcal{P}^\prime$ there exists a $j\ge 0$ such that
\begin{align}
\label{eqn:ac_masslemmapf4}
\int_{\substack{E_{P}\cap (\dil{I_{P}}{2^j}\backslash \dil{I_{P}}{2^{j-1}})}} \frac{dx}{\left(1+2^{-k_P}\norm{x-c(I_{P}})\right)^{\nu_1}} >C |I_{P}|
\mu 2^{-j}.
\end{align}
Note that for $x\in \dil{I_{P}}{2^j}\backslash \dil{I_{P}}{2^{j-1}}$ we have
\[1+2^{-k_P}\norm{x-c(I_{P})}\ge C2^j. \]
Using this we obtain from \eqref{eqn:ac_masslemmapf4},
\begin{align}
\label{eqn:ac_masslemmapf4a}
|I_{P}|<C\mu^{-1} |E_{P}\cap \dil{I_{P}}{2^j}| 2^{-(\nu_1-1)j}.
\end{align}
Summarizing, we have shown that for every $P\in\mathcal{P}'$ there exists $j\ge 0$ such that \eqref{eqn:ac_masslemmapf4a} holds.
This leads us to define for every $j\ge 0$, a set of tiles $\mathcal{P}_j$ by
$$\mathcal{P}_j=\{P\in\mathcal{P}^\prime\,:\,|I_{P}|<C\mu^{-1} |E_{P}\cap \dil{I_{P}}{2^j}| 2^{-j(\nu_1-1)}\}.$$
The estimate \eqref{eqn:ac_masslemmapf2} will follow by summing over $j$ if we can show that
\begin{align}
\label{eqn:masslemmapf5}
\sum_{P\in \mathcal{P}_j} |I_P| \le C 2^{-j} \mu^{-1}
\end{align}
for all $j\ge 0$. To show \eqref{eqn:masslemmapf5} we use a covering argument reminiscent of Vitali's covering lemma.
Fix $j\ge 0$. For every tile $P=I_P\times\omega_P$ we have an enlarged tile $\dil{I_{P}}{2^j} \times \omega_P$ (this is not a tile anymore).
We inductively choose $P_i\in \mathcal{P}_j$ such that $|I_{P_i}|$ is maximal among the $P\in\mathcal{P}_j\backslash \{P_0,\dots,P_{i-1}\}$ and the enlarged tile of $P_i$ is disjoint from the enlarged tiles of $P_0,\dots,P_{i-1}$. Since $\mathcal{P}_j$ is finite, this process terminates after finitely many steps, so that we have selected a subset $\mathcal{P}^\prime_j=\{P_0,P_1,\dots\}\subset \mathcal{P}_j$ of tiles whose enlarged tiles are pairwise disjoint. By construction, for every $P\in\mathcal{P}_j$ there exists a unique $P^\prime\in \mathcal{P}^\prime_j$ such that $|I_P|\le |I_{P^\prime}|$ and the enlarged tiles of $P$ and $P^\prime$ intersect. We call $P$ \emph{associated} with $P^\prime$.\\
Now the claim is that if two tiles $P,Q\in\mathcal{P}_j$ are associated with the same $P^\prime\in\mathcal{P}^\prime_j$, then $I_P$ and $I_{Q}$ are disjoint. To see this note that $\omega_P$ intersects $\omega_{P^\prime}$ by definition. Thus, since $|I_P|\le |I_{P^\prime}|$, we have $\omega_{P^\prime}\subset \omega_P$. The same holds for $Q$. Therefore we have $\omega_{P^\prime}\subset \omega_P\cap \omega_{Q}$. But $P,Q\in \mathcal{P}_j\subset \mathcal{P}'$ are disjoint tiles, so we must have $I_P\cap I_{Q}=\emptyset$. Moreover, all tiles $P$ associated with $P^\prime$ satisfy $I_P\subset \dil{I_{P'}}{2^{j+2}}$. Therefore we get
\begin{align*}
\sum_{P\in\mathcal{P}_j} |I_P| &= \sum_{P^\prime\in\mathcal{P}^\prime_j} \sum_{\substack{P\in\mathcal{P}_j\\\text{assoc. with}\,P^\prime}} |I_P|=\sum_{P^\prime\in\mathcal{P}_j^\prime} \left| \bigcup\displaylimits_{\substack{P\in\mathcal{P}_j\\\text{assoc. with}\,P^\prime}} I_P\right| \\
&\le \sum_{P^\prime\in \mathcal{P}^\prime_j} 2^{(j+2)|\alpha|} |I_{P^\prime}|\le C \mu^{-1} 2^{-j(\nu_1-|\alpha|-1)}\sum_{P^\prime\in\mathcal{P}^\prime_j} |E\cap N^{-1}(\omega_{P^\prime})\cap \dil{I_{P'}}{2^j}|\\
&\le C 2^{-j} \mu^{-1},
\end{align*}
using that $\nu_1>|\alpha|+1$. The penultimate inequality is a consequence of \eqref{eqn:ac_masslemmapf4a} and the last inequality follows, because the sets $N^{-1}(\omega_{P^\prime})\cap \dil{I_{P'}}{2^j}$ are disjoint and $|E|\le 1$. 

\section{Proof of the energy lemma}\label{sec:ac_energy}
In this section we prove Lemma \ref{lemma:ac_energy}. We adapt the argument of Lacey and Thiele \cite[Prop. 3.2]{LT00}.
The tree selection algorithm of Lacey and Thiele relies on the natural ordering of real numbers. In our situation this can be replaced by any functional on $\R^n$ that separates $\omega_{P(0)}$ from $\omega_{P(r)}$ for every tile $P\in\overline{\mathcal{P}}$ (this was already observed in \cite{PT03}). Let $i_0$ be such that $r_{i_0}=1$ (exists because $r\not=0$). Let us introduce the projection to the $i_0$th coordinate: $\pi_0:\R^n\to\R$, $x\mapsto x_{i_0}$. Then we have that \begin{equation}\label{eqn:ac_ordering}
\pi_0(\xi)<\pi_0(\eta)
\end{equation}
holds for every $\xi\in\omega_{P(0)}, \eta\in\omega_{P(r)}, P\in\overline{\mathcal{P}}$.\\
Let $\varepsilon=\mathcal{E}(\mathcal{P})$. For a $2$--tree $\mathbf{T}_2$ we define
\[ \Delta(\mathbf{T}_2) = \left(\frac1{|I_{\mathbf{T}_2}|} \sum_{P\in\mathbf{T}_2} |\langle f,\phi_P\rangle|^2\right)^{1/2}. \]
We will now describe an algorithm to choose the desired collection of trees $\mathcal{T}$ and also an auxiliary collection of $2$--trees $\mathcal{T}_2$:
\begin{enumerate}
\item[(1)] Initialize $\mathcal{T}:=\mathcal{T}_2:=\emptyset$ and $\mathcal{P}^{\mathrm{stock}}:=\mathcal{P}$. 
\item[(2)] Choose a $2$--tree $\mathbf{T}_2\subset\mathcal{P}^{\mathrm{stock}}$ such that 
\begin{enumerate}
\item[(a)] $\Delta(\mathbf{T}_2)\ge \varepsilon/2$, and
\item[(b)] $\pi_0(c(\omega_{\mathbf{T}_2}))$ is minimal among all the $2$--trees in $\mathcal{P}^{\mathrm{stock}}$ satisfying (a).
\end{enumerate}
If no such $\mathbf{T}_2$ exists, then terminate.
\item[(3)] Let $\mathbf{T}$ be the maximal tree in $\mathcal{P}^{\mathrm{stock}}$ with top $P_{\mathbf{T}_2}$ (with respect to set inclusion).
\item[(4)] Add $\mathbf{T}$ to $\mathcal{T}$ and $\mathbf{T}_2$ to $\mathcal{T}_2$. Also, remove all the elements of $\mathbf{T}$ from $\mathcal{P}^{\mathrm{stock}}$. Then continue again with Step (2).
\end{enumerate}

Since $\mathcal{P}$ is finite it is clear that the algorithm terminates after finitely many steps. Also note for every $\mathbf{T}\in\mathcal{T}$ there exists a unique $\mathbf{T}_2\in\mathcal{T}_2$ with $\mathbf{T}_2\subset\mathbf{T}$, and vice versa. After the algorithm terminates we set $\mathcal{P}_{\mathrm{low}}=\mathcal{P}^{\mathrm{stock}}$ and $\mathcal{P}_{\mathrm{high}}$ to be the union of the trees in $\mathcal{T}$. Then, \eqref{eqn:ac_energylemma1} is automatically satisfied and it only remains to show
\begin{equation}\label{eqn:ac_energylemma3}
\sum_{\mathbf{T}_2\in\mathcal{T}_2} |I_{\mathbf{T}_2}| \lesssim \varepsilon^{-2}.
\end{equation}
Before we do that we establish a geometric property of the selected trees that will be crucial in the following. 
\begin{lemma}\label{lemma:ac_disjoint}
Let $\mathbf{T}_2\not=\mathbf{T}'_2\in \mathcal{T}_2$ and $P\in\mathbf{T}_2, P'\in\mathbf{T}'_2$. If $\omega_P\subset\omega_{1P'}$, then $I_{P'}\cap I_{\mathbf{T}_2}=\emptyset$.
\end{lemma}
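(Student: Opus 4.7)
The plan is to argue by contradiction, combining the $\pi_0$-separation property \eqref{eqn:ac_ordering} that drives the greedy rule in step (2)(b) with a size comparison coming from the hypothesis $\omega_P \subset \omega_{1P'}$.

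First I would extract the ordering consequence. Since $P \le P_{\mathbf{T}_2}$, the definition of $\le$ gives $c(\omega_{\mathbf{T}_2}) \in \omega_P \subset \omega_{1P'}$. On the other hand, $P' \in \mathbf{T}'_2$ together with the $2$-tree condition gives $c(\omega_{\mathbf{T}'_2}) \in \omega_{P'(r)}$. Since $\omega_{1P'}$ is the semi-tile of $P'$ on the lower side of the $\pi_0$-ordering from $\omega_{P'(r)}$, property \eqref{eqn:ac_ordering} applied at $P'$ produces the strict inequality $\pi_0(c(\omega_{\mathbf{T}_2})) < \pi_0(c(\omega_{\mathbf{T}'_2}))$.

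Next I would record a size comparison. The inclusion $\omega_P \subset \omega_{1P'}$ gives $|\omega_P| \le |\omega_{1P'}| = 2^{-n} |\omega_{P'}|$, hence $|I_P| > |I_{P'}|$ via the tile identity $|I|\cdot|\omega|=1$. Combining this with $|I_P| \le |I_{\mathbf{T}_2}|$ (from $P \le P_{\mathbf{T}_2}$) yields the strict inequality $|I_{P'}| < |I_{\mathbf{T}_2}|$.

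Now suppose towards a contradiction that $I_{P'} \cap I_{\mathbf{T}_2} \neq \emptyset$. Since both cubes belong to $\mathcal{D}^\alpha$, dyadic nesting together with the strict size comparison forces $I_{P'} \subsetneq I_{\mathbf{T}_2}$. The containment $c(\omega_{\mathbf{T}_2}) \in \omega_{1P'} \subset \omega_{P'}$ established in the first step then yields $P' \le P_{\mathbf{T}_2}$. To get the contradiction I would unwind the algorithm: at every iteration preceding the one at which $\mathbf{T}_2$ is selected, $\mathbf{T}_2$ is still an eligible $2$-tree in $\mathcal{P}^{\mathrm{stock}}$ with $\Delta(\mathbf{T}_2)\ge \varepsilon/2$, so the minimality criterion (2)(b) combined with the strict inequality $\pi_0(c(\omega_{\mathbf{T}_2})) < \pi_0(c(\omega_{\mathbf{T}'_2}))$ forces $\mathbf{T}_2$ to be selected strictly before $\mathbf{T}'_2$. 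At that earlier iteration $P'$ was still in $\mathcal{P}^{\mathrm{stock}}$ (it is not removed until $\mathbf{T}'_2$ is chosen), and $P' \le P_{\mathbf{T}_2}$ places $P'$ in the maximal tree $\mathbf{T}$ with top $P_{\mathbf{T}_2}$ constructed in step (3). Hence $P'$ is removed from $\mathcal{P}^{\mathrm{stock}}$ strictly before $\mathbf{T}'_2$ is selected, contradicting $P' \in \mathbf{T}'_2$.

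The main obstacle is the algorithmic bookkeeping, particularly the claim that $\mathbf{T}_2$ must precede $\mathbf{T}'_2$ in the selection order (including the observation that no prior iteration can have removed any element of $\mathbf{T}_2$, since otherwise that iteration's top would violate the very same minimality). The dyadic and tile-size arithmetic is routine given the definitions from Sections \ref{sec:ac_reduction} and \ref{sec:ac_energy}.
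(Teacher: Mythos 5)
Your proof is correct and follows essentially the same route as the paper: the $\pi_0$-separation property \eqref{eqn:ac_ordering} combined with the greedy minimality rule in step (2)(b) shows $\mathbf{T}_2$ is selected first, and then the containment $P'\le P_{\mathbf{T}_2}$ would have forced $P'$ into the maximal tree $\mathbf{T}$ at step (3), contradicting $P'\in\mathbf{T}'_2$. The one place you go beyond the paper is the explicit size comparison $|I_{P'}|<|I_{\mathbf{T}_2}|$ (via $|\omega_{P'(0)}|=2^{-n}|\omega_{P'}|$), which rules out the reverse nesting $I_{\mathbf{T}_2}\subsetneq I_{P'}$; the paper leaves this implicit when it asserts that a nonempty intersection forces $I_{P'}\subset I_\mathbf{T}$. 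That is a worthwhile bit of bookkeeping to make explicit, but it is the same argument.
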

\begin{proof}
Note that $c(\omega_{\mathbf{T}_2})\in\omega_P\subset\omega_{P'(0)}$ while $c(\omega_{\mathbf{T_2'}})\in\omega_{P'(r)}$. By \eqref{eqn:ac_ordering} and condition (b) in Step (2) we therefore conclude that $\mathbf{T}_2$ was chosen before $\mathbf{T}_2'$ during the above algorithm. Let $\mathbf{T}$ be the tree in $\mathcal{T}$ such that $\mathbf{T}_2\subset\mathbf{T}$. Thus, if $I_{P'}$ was not disjoint from $I_{\mathbf{T}_2}=I_\mathbf{T}$, then it would be contained in $I_\mathbf{T}$ and therefore $P'\le P_\mathbf{T}$ which means it would have been included into $\mathbf{T}$ during Step (3). That is a contradiction.
\end{proof}
The sum in \eqref{eqn:ac_energylemma3} equals
\[ \sum_{T_2\in\mathcal{T}_2} \Delta(\mathbf{T}_2)^{-2} \sum_{P\in\mathbf{T}_2} |\langle f,\phi_P\rangle| \le 4 \varepsilon^{-2} \sum_{P\in\bigcup\mathcal{T}_2} |\langle f,\phi_P\rangle|^2, \]
where $\bigcup\mathcal{T}_2=\bigcup_{\mathbf{T}_2\in\mathcal{T}_2} \mathbf{T}_2$. Let us write
\begin{equation}\label{eqn:ac_energylemma4}
\sum_{P\in\bigcup\mathcal{T}_2} |\langle f,\phi_P\rangle|^2=\Big\langle\sum_{P\in\bigcup\mathcal{T}_2}\langle f,\phi_P\rangle \phi_P, f\Big\rangle
\end{equation}
and use the Cauchy-Schwarz inequality to estimate this by
\begin{equation}\label{eqn:ac_energylemma5}
\Big\|\sum_{P\in\bigcup\mathcal{T}_2}\langle f,\phi_P\rangle \phi_P\Big\|_2, 
\end{equation}
where we used that $\|f\|_2=1$. So far we have shown that
\begin{equation}\label{eqn:ac_energylemma6}
\varepsilon^2 \sum_{\mathbf{T}_2\in\mathcal{T}_2} |I_{\mathbf{T}_2}| \lesssim \Big\|\sum_{P\in\bigcup\mathcal{T}_2}\langle f,\phi_P\rangle \phi_P\Big\|_2.
\end{equation}
Thus if we can show that
\begin{equation}\label{eqn:ac_energylemma7}
\Big\|\sum_{P\in\bigcup\mathcal{T}_2}\langle f,\phi_P\rangle \phi_P\Big\|_2^2 \lesssim \varepsilon^2 \sum_{\mathbf{T}_2\in\mathcal{T}_2} |I_{\mathbf{T}_2}|,
\end{equation}
then \eqref{eqn:ac_energylemma3} follows. Expanding the $L^2$ norm in \eqref{eqn:ac_energylemma7} we get that the left hand side is bounded by
\begin{equation}\label{eqn:ac_energylemma8}
\sum_{\substack{P,P'\in\bigcup\mathcal{T}_2,\\\omega_P=\omega_{P'}}} |\langle f,\phi_P\rangle \langle f,\phi_{P'}\rangle \langle \phi_P,\phi_{P'}\rangle| + 2 \sum_{\substack{P,P'\in\bigcup\mathcal{T}_2,\\\omega_P\subset\omega_{P'(0)}}} |\langle f,\phi_P\rangle \langle f,\phi_{P'}\rangle \langle \phi_P,\phi_{P'}\rangle|.
\end{equation}
Here we have used that $\langle \phi_P,\phi_P'\rangle=0$ if $\omega_{P(0)}\cap\omega_{P'(0)}=\emptyset$ and therefore either $\omega_P=\omega_{P'}$, $\omega_P\subset\omega_{P'(0)}$, or $\omega_{P'}\subset\omega_{P(0)}$ (the last two cases are symmetric). We treat both sums in this term separately. Estimating the smaller one of $|\langle f,\phi_P\rangle|$ and $|\langle f,\phi_{P'}\rangle$ by the larger one, we obtain that the first sum in \eqref{eqn:ac_energylemma8} is
\[\lesssim \sum_{P\in\bigcup\mathcal{T}_2} |\langle f,\phi_P\rangle|^2 \sum_{\substack{P'\in\bigcup\mathcal{T}_2,\\\omega_P=\omega_{P'}}} |\langle \phi_P,\phi_{P'}\rangle|. \]
Using \eqref{eqn:tileinteraction} we estimate this by
\begin{equation}\label{eqn:ac_energylemma9}
\sum_{P\in\bigcup\mathcal{T}_2} |\langle f,\phi_P\rangle|^2 \sum_{\substack{P'\in\bigcup\mathcal{T}_2,\\\omega_P=\omega_{P'}}}  (1+2^{-k_P}\norm{c(I_P)-c(I_{P'})})^{-\nu}.
\end{equation}
Notice that $I_P\cap I_{P'}=\emptyset$ for $P\not=P'$ in the inner sum. This implies 
\[ \sum_{\substack{P'\in\bigcup\mathcal{T}_2,\\\omega_P=\omega_{P'}}}  (1+2^{-k_P}\norm{c(I_P)-c(I_{P'})})^{-\nu}\lesssim \int_{\R^n} (1+\norm{x})^{-\nu} dx \lesssim 1, \]
provided that $\nu>|\alpha|$. Therefore \eqref{eqn:ac_energylemma9} is
\begin{equation}
\lesssim \sum_{\mathbf{T}_2\in\mathcal{T}_2} \sum_{P\in \mathbf{T}_2} |\langle f,\phi_P\rangle|^2 \le \varepsilon^2 \sum_{\mathbf{T}_2\in\mathcal{T}_2} |I_{\mathbf{T}_2}|,
\end{equation}
as desired. It remains to estimate the second sum in \eqref{eqn:ac_energylemma8}. To that end it suffices to show that
\begin{equation}\label{eqn:ac_energylemma10} \sum_{P\in\mathbf{T}_2} \sum_{P'\in\mathcal{S}_P} |\langle f,\phi_P\rangle \langle f,\phi_{P'}\rangle \langle \phi_P,\phi_{P'}\rangle| \lesssim \varepsilon^2 |I_{\mathbf{T}_2}|,
\end{equation}
for every $\mathbf{T}_2\in\mathcal{T}_2$, where
\[\mathcal{S}_P=\Big\{P'\in\bigcup\mathcal{T}_2\,:\,\omega_P\subset\omega_{P'(0)}\Big\}. \]
Here we follow the argument given in \cite{Lac04}. Observe that if $P\in\mathbf{T}_2$, then $\mathcal{S}_P\cap \mathbf{T}_2=\emptyset$.
Interpreting the singleton
$\{P\}$ as a $2$--tree we obtain
\begin{equation}\label{eqn:ac_singleton}
|\langle f,\phi_P\rangle|\le \varepsilon |I_P|^{1/2}
\end{equation}
for all $P\in\mathcal{P}$. Combining this with \eqref{eqn:tileinteraction} we can estimate the left hand side of \eqref{eqn:ac_energylemma10} by
\begin{equation}\label{eqn:ac_energylemma11}
\varepsilon^2 \sum_{P\in\mathbf{T}_2} \sum_{P'\in\mathcal{S}_P} |I_{P'}| (1+2^{-k_P}\norm{c(I_P)-c(I_{P'})})^{-\nu}. 
\end{equation}
Indeed, Lemma \ref{lemma:ac_disjoint} implies that $I_{\mathbf{T}_2}\cap I_{P'}=\emptyset$ for every $P\in\mathbf{T}_2, P'\in\mathcal{S}_P$. Moreover, it also implies that for $P'\not=P''\in\mathcal{S}_P$ we have $I_{P'}\cap I_{P''}=\emptyset$. These facts facilitate the following estimate:
\[ \sum_{P\in\mathbf{T}_2} \sum_{P'\in\mathcal{S}_P} |I_{P'}| (1+2^{-k_P}\norm{c(I_P)-c(I_{P'})})^{-\nu} \lesssim \sum_{P\in\mathbf{T}_2} \sum_{P'\in\mathcal{S}_P} \int_{I_{P'}} (1+2^{-k_P}\norm{c(I_P)-x})^{-\nu} dx \]
\[\lesssim \sum_{P\in\mathbf{T}_2} \int_{(I_{\mathbf{T}_2})^c} (1+2^{-k_P}\norm{c(I_P)-x})^{-\nu}. \]
Since $\mathbf{T}_2$ is a tree, the last quantity can be estimated by
\[
\sum_{k\le k_{\mathbf{T}_2}} \sum_{u\in Q_k\cap (\Z^n+\frac12)}  \int_{(I_{\mathbf{T}_2})^c} (1+\norm{u-\dil{x}{2^{-k}}})^{-\nu} dx,
\]
where $Q_k\in\mathcal{D}^\alpha$ is an anisotropic dyadic rectangle of scale $k_{\mathbf{T}_2}-k$ that is given by a rescaling of $I_{\mathbf{T}_2}$. The previous display is no greater than a constant times
\begin{align}\label{eqn:ac_energyestpfend}
\sum_{k\le k_{\mathbf{T}_2}} 2^{k|\alpha|} \left(\sum_{u\in Q_k\cap (\Z^n+\frac12)}  (1+\mathrm{dist}_\alpha((Q_k)^c, u)^{-|\alpha|-\gamma}\right) \left(\int_{\R^n} (1+\norm{x})^{-(\nu-|\alpha|-\gamma)} dx\right),
\end{align}
where $\nu>2|\alpha|$ and $\gamma$ is a fixed and sufficiently small positive constant. The integral over $x$ in the previous display is bounded by a constant depending on $\nu-|\alpha|-\gamma>|\alpha|$. To estimate the sum over $u$ we note that for every $u$ in the indicated range there exists a lattice point $v\in \partial Q_k\cap \Z^n$ such that $\mathrm{dist}_\alpha((Q_k)^c,u)\ge \frac12 \norm{v-u}$.
Thus we may bound the sum over $u$ by
\[ \sum_{v\in \partial Q_k\cap \Z^n} \sum_{u\in \Z^n+\frac12}  (1+\norm{v-u})^{-|\alpha|-\gamma}\lesssim |\partial Q_k\cap \Z^n|\lesssim 2^{(k_{\mathbf{T}_2}-k)|\alpha|_\infty}. \]
Thus, \eqref{eqn:ac_energyestpfend} is bounded by a constant times
\[ 2^{k_{\mathbf{T}_2}|\alpha|_\infty} \sum_{k\le k_{\mathbf{T}_2}} 2^{k(|\alpha|-|\alpha|_\infty)} \lesssim 2^{k_{\mathbf{T}_2}|\alpha|}=|I_{\mathbf{T}_2}|. \]
This proves \eqref{eqn:ac_energylemma10}.

\section{Proof of the tree estimate}\label{sec:ac_tree}

In this section we prove Lemma \ref{lemma:ac_tree}. This is the core of the proof. 
For a rectangle $I=\prod_{i=1}^n I_i\in\mathcal{D}^\alpha$ we denote by $\fam{I}$ the enlarged rectangle defined by
\[ \fam{I} = \prod_{i=1}^n (2^{\alpha_i+1}-1) I_i. \]
Here $\lambda I_i$ is the interval of length $\lambda |I_i|$ with the same center as $I_i$. Let $\mathcal{J}$ be the partition of $\R^n$ that is given by the collection of maximal
anisotropic dyadic rectangles $J\in\mathcal{D}^\alpha$ such that $\fam{J}$ does not contain any $I_P$ with $P\in \mathbf{T}$ (maximal with respect to inclusion). 
Set $\varepsilon=\mathcal{E}(\mathbf{T})$ and $\mu=\mathcal{M}(\mathbf{T})$. 
Choose phase factors $(\epsilon_P)_P$ of modulus $1$ such that
\[\sum_{P\in \mathbf{T}} |\langle f,\phi_P\rangle\langle\psi_P^{N(\cdot)}, \mathbf{1}_{E_{P(r)}}
\rangle|=\int_{\R^n}\sum_{P\in \mathbf{T}} \epsilon_P \langle f,\phi_P\rangle \psi_P^{N(x)}(x) \mathbf{1}_{E_{P(r)}}(x) dx\]
\[ \le \left\|\sum_{P\in \mathbf{T}} \epsilon_P \langle f,\phi_P\rangle
\psi_P^N \mathbf{1}_{E_{P(r)}}\right\|_{1}
\le \mathcal{K}_1 + \mathcal{K}_2,
\]
where
\[\mathcal{K}_1 = \sum_{J\in\mathcal{J}} \sum_{P\in \mathbf{T}, |I_P|\le |\parent{J}|}\|\langle f,\phi_P\rangle \psi_P^{N(\cdot)} \mathbf{1}_{E_{P(r)}} \|_{L^1(J)},\]
\[\mathcal{K}_2 = \sum_{J\in\mathcal{J}} \left\|\sum_{P\in \mathbf{T}, |I_P|>|\parent{J}|}
\epsilon_P\langle f,\phi_P\rangle \psi_P^{N(\cdot)} \mathbf{1}_{E_{P(r)}} \right\|_{L^1(J)}.\]
We first estimate $\mathcal{K}_1$. This is the easy part, since in the sum defining $\mathcal{K}_1$ we have that $I_P$ is disjoint from $\fam{J}$. Again, interpreting the singleton $\{P\}$ as a $2$--tree we see that \eqref{eqn:ac_singleton} holds for all $P\in\mathbf{T}$. This gives
\[
\mathcal{K}_1 \le \varepsilon \sum_{J\in\mathcal{J}} \sum_{\substack{P\in\mathbf{T}\\|I_P|\le |\parent{J}|}} 2^{|\alpha|k_P/2} \| \psi_P^{N(\cdot)} \mathbf{1}_{E_{P(r)}} \|_{L^1(J)}.
\]
Using \eqref{eqn:psiestimate} the previous display is seen to be no larger than a constant times
\[ \|m\|_{\mathscr{M}^{\nu_0}} \varepsilon \sum_{J\in\mathcal{J}} \sum_{\substack{P\in\mathbf{T}\\|I_P|\le |\parent{J}|}} \int_{J\cap E_{P(r)}} w^{\nu_0}(2^{-k_P}(x-c(I_P))) dx \]
\begin{equation}\label{eqn:ac_K1est1} \le \|m\|_{\mathscr{M}^{\nu_0}} \varepsilon \mu  \sum_{J\in\mathcal{J}} \sum_{\substack{P\in\mathbf{T}\\|I_P|\le |\parent{J}|}} 2^{|\alpha|k_P} \sup_{x\in J} w^{2|\alpha|+\frac23}(2^{-k_P}(x-c(I_P))),
\end{equation}
where we have set $\nu_1=|\alpha|+\frac43$.
Since $I_P$ is disjoint from $\fam{J}$ we can estimate \eqref{eqn:ac_K1est1} as 
\begin{equation}\label{eqn:ac_K1est2} \lesssim  \|m\|_{\mathscr{M}^{\nu_0}} \varepsilon\mu  \sum_{J\in\mathcal{J}} \sum_{\substack{k\in\Z,\\2^{k|\alpha|}\le |\parent{J}|}}2^{|\alpha|k} \sum_{\substack{P\in\mathbf{T},\\k_P=k}}  
w^{2|\alpha|+\frac23}(2^{-k}\mathrm{dist}_\alpha(J,I_P)).
\end{equation}
Before we proceed, we claim that for every $\nu>|\alpha|$, $k\in\Z$ and fixed $J\in\mathcal{J}$ with $2^{k|\alpha|}\le |\parent{J}|$ we have
\begin{equation}\label{eqn:ac_K1est3} \sum_{\substack{P\in\mathbf{T},\\k_P=k}}  
w^{\nu}(2^{-k}\mathrm{dist}_\alpha(J,I_P)) \lesssim 1,
\end{equation}
where the implicit constant blows up as $\nu$ approaches $|\alpha|$. To verify the claim, let us assume for simpler notation that $J$ is centered at the origin. Then by disjointness of $I_P$ and $\fam{J}$ we have \[\mathrm{dist}_\alpha(J,I_P)\gtrsim \mathrm{dist}_\alpha(0,I_P)\gtrsim 2^k \rho(m),\]
where $m=(m_1,\dots,m_n)\in\Z^n$ is such that $I_P=\prod_{i=1}^n [2^{k\alpha_i} m_i, 2^{k\alpha_i} (m_i+1))$.
Thus the sum in \eqref{eqn:ac_K1est3} is
\[ \lesssim \sum_{m\in\Z^n} (1+\rho(m))^{-\nu}, \]
which implies the claim.

Estimate \eqref{eqn:ac_K1est2} by
\begin{equation}\label{eqn:ac_K1est4}
\lesssim  \|m\|_{\mathscr{M}^{\nu_0}} \varepsilon\mu  \sum_{J\in\mathcal{J}}w^{|\alpha|+\frac13}(2^{-k_\mathbf{T}}\mathrm{dist}_\alpha(J,I_T)) \sum_{\substack{k\in\Z,\\2^{k|\alpha|}\le |\parent{J}|}}2^{|\alpha|k},
\end{equation}
Here $k_\mathbf{T}$ is the scale of $I_\mathbf{T}$ and we have used \eqref{eqn:ac_K1est3} and
\[2^{-k} \mathrm{dist}_\alpha(J,I_P)\ge 2^{-k_\mathbf{T}} \mathrm{dist}_\alpha (J,I_\mathbf{T}). \]
Summing the geometric series, \eqref{eqn:ac_K1est4} is
\[\lesssim \|m\|_{\mathscr{M}^{\nu_0}} \varepsilon\mu  \sum_{J\in\mathcal{J}} w^{|\alpha|+\frac13}(2^{-k_\mathbf{T}}\mathrm{dist}_\alpha(J,I_P)) |J|. \]
The sum in that expression can be estimated as follows:
\[ \sum_{J\in\mathcal{J}} w^{|\alpha|+\frac13}(2^{-k_\mathbf{T}}\mathrm{dist}_\alpha(J,I_P)) |J| \lesssim \sum_{J\in\mathcal{J}} \int_J (1+2^{-k_\mathbf{T}}\rho(x-c(I_\mathbf{T})))^{-(|\alpha|+\frac13)} dx.\]
By disjointness of the $J$ we can bound this by
\[ \int_{\R^n} (1+2^{-k_\mathbf{T}}\rho(x-c(I_\mathbf{T})))^{-(|\alpha|+\frac13)} dx = |I_\mathbf{T}| \int_{\R^n} (1+\rho(x))^{-(|\alpha|+\frac13)} dx\lesssim |I_\mathbf{T}|. \]
To summarize, we showed that
\begin{equation}\label{eqn:tailest}
\mathcal{K}_1\lesssim \|m\|_{\mathscr{M}^{\nu_0}} \varepsilon \mu |I_\mathbf{T}|,
\end{equation}
using that $\nu_0\ge 3|\alpha|+2$.\\
Let us proceed to estimating $\mathcal{K}_2$. This is more difficult. We may assume that the sum runs only over
those $J$ for which there is a $P\in \mathbf{T}$ such that $|I_P|>|\parent{J}|$. Then
$|I_\mathbf{T}|>|\parent{J}|$ and $J\subset \fam{I_\mathbf{T}}$.
From now on let such a $J$ be fixed. Define
\begin{align}
\label{eqn:GJdef}
G_{J}=J\cap \bigcup_{P\in \mathbf{T}, |I_{P}|>|\parent{J}|} E_{P(r)}
\end{align}
Before proceeding we prove the following.
\begin{lemma}\label{lemma:ac_GJest}
There exists a constant $C>0$ independent of $J$ such that
\begin{align}
\label{eqn:GJest}
|G_{J}|\le C\mu |J|
\end{align}
\end{lemma}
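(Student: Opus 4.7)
The plan is to find a single auxiliary tile $\tilde{P}\in\overline{\mathcal{P}}$ with the three properties (a) $\tilde{P}\ge P_*$ for some $P_*\in\mathbf{T}$, (b) $G_J\subset E_{\tilde{P}}$, and (c) $w_{\tilde{P}}^{\nu_1}(x)\gtrsim|J|^{-1}$ for all $x\in J$. Granted such a $\tilde{P}$, the mass definition yields
\[
|J|^{-1}|G_J|\;\lesssim\;\int_{G_J}w_{\tilde{P}}^{\nu_1}(x)\,dx\;\le\;\int_{E_{\tilde{P}}}w_{\tilde{P}}^{\nu_1}(x)\,dx\;\le\;\mathcal{M}(P_*)\;\le\;\mu,
\]
which is precisely \eqref{eqn:GJest}.

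To construct $\tilde{P}$, I would first exploit the maximality of $J\in\mathcal{J}$: by definition the enlarged rectangle $\fam{\parent{J}}$ contains some $I_{P_*}$ with $P_*\in\mathbf{T}$. A short dyadic scale count against the anisotropic side lengths $(2^{\alpha_i+1}-1)|\parent{J}_i|$ of $\fam{\parent{J}}$ forces $k_{P_*}\le k_J+2$, and hence $|I_{P_*}|\le 4^{|\alpha|}|J|$. I then take $I_{\tilde{P}}$ to be the dyadic ancestor of $I_{P_*}$ at scale $k_J+2$, and $\omega_{\tilde{P}}$ to be the unique anisotropic dyadic cube of volume $1/|I_{\tilde{P}}|=2^{-2|\alpha|}/|J|$ that contains $c(\omega_{\mathbf{T}})$; these are compatible sizes, so $\tilde{P}\in\overline{\mathcal{P}}$.

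Property (a) is routine dyadic bookkeeping: $I_{P_*}\subset I_{\tilde{P}}$ by construction, and $|\omega_{\tilde P}|\le|\omega_{P_*}|$ together with the fact that both dyadic cubes contain $c(\omega_\mathbf{T})$ (the latter because $P_*\in\mathbf{T}$) forces $\omega_{\tilde P}\subset\omega_{P_*}$, so $c(\omega_{\tilde P})\in\omega_{P_*}$. Property (b) is the crucial observation: every $P\in\mathbf{T}$ with $|I_P|>|\parent{J}|$ satisfies $|I_P|\ge 4^{|\alpha|}|J|=|I_{\tilde P}|$ by dyadic scaling, whence $|\omega_P|\le|\omega_{\tilde P}|$; since both $\omega_P$ and $\omega_{\tilde P}$ are dyadic cubes containing $c(\omega_\mathbf{T})$, dyadic nesting forces $\omega_{P(r)}\subset\omega_P\subset\omega_{\tilde{P}}$, and therefore $N(x)\in\omega_{\tilde{P}}$ for every $x\in G_J$. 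Finally, (c) holds because both $J$ and $I_{\tilde P}$ lie inside $\fam{\parent{J}}$, whose anisotropic diameter matches the scale $k_{\tilde P}=k_J+2$, giving $2^{-k_{\tilde P}}\rho(x-c(I_{\tilde P}))\lesssim 1$ uniformly on $J$.

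The main obstacle I anticipate is the need to reconcile (b) with (c): (b) demands that $\omega_{\tilde P}$ be large enough to absorb all the $\omega_{P(r)}$'s entering $G_J$, which pushes $|I_{\tilde P}|$ to be small, while (c) together with (a) demands that $I_{\tilde P}$ sit near $J$ and dominate some tile of $\mathbf{T}$, forcing $|I_{\tilde P}|$ to be at least of order $|J|$. The two conditions meet with a constant factor to spare precisely because the maximality of $J$ caps the scale of the witness $P_*$ at $k_J+2$.
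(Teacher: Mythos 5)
Your proposal is correct and takes essentially the same approach as the paper: the paper likewise fixes a tile $P_0\in\mathbf{T}$ with $I_{P_0}\subset\fam{\parent{J}}$, promotes it to an auxiliary tile $P'\ge P_0$ with $|I_{P'}|=|\gparent{J}|$ and $c(\omega_{\mathbf{T}})\in\omega_{P'}$, shows $G_J\subset J\cap E_{P'}$ via the same dyadic nesting of frequency intervals, and finishes with $\mathbf{1}_J\lesssim|I_{P'}|w_{P'}$ and the mass definition. Your $\tilde P$ is the paper's $P'$, your $P_*$ is the paper's $P_0$, and the scale count $k_{P_*}\le k_J+2$ is the same implicit step the paper uses to justify $|I_{P_0}|\le|\gparent{J}|$.
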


\begin{proof}
By definition of $J$, there exists $P_0\in\mathbf{T}$ such that $I_{P_0}$ is contained in $\fam{\parent{J}}$. 
We claim that there exists a tile $P_0<P'<P_\mathbf{T}$ such that $|I_{P'}|=|J^{++}|$.
Indeed, note $|I_{P_0}|\le |J^{++}|$. If there is equality, we simply take $P'=P_0$. Otherwise we take $I_{P'}\in\mathcal{D}^\alpha$ to be the unique dyadic ancestor of $I_{P_0}$ such that $|I_{P'}|=|J^{++}|$ and choose $\omega_{P'}$ accordingly such that it contains $c(\omega_\mathbf{T})$.
Now we have
\[|\omega_P|=|I_P|^{-1}\le |\gparent{J}|^{-1}= |I_{P'}|^{-1} = |\omega_{P'}|\]
for every tile $P\in\mathbf{T}$ with $|I_P|>|J^+|$. This implies $\omega_P\subset \omega_{P'}$ and thus
\[ G_J \subset J\cap E_{P'}. \]
As a consequence,
\[ |G_J| \le \int_{E_{P'}} \mathbf{1}_J(x) dx \lesssim |I_{P'}| \int_{E_{P'}} w_{P'}(x) dx\lesssim \mu |J|. \]
\end{proof}
Let us define
\begin{align}
\label{eqn:FJdef}
F_{J}=\sum_{\substack{P\in \mathbf{T},\\|I_{P}|>|\parent{J}|}} \epsilon_{P} \langle f,\phi_{P}\rangle \psi^{N(\cdot)}_{P} \mathbf{1}_{E_{P(r)}}.
\end{align}
Since every tree can be written as the union of a $1$--tree and a $2$--tree, we may treat each of these cases separately. 

\subsection{The case of $1$--trees}
Assume that $\mathbf{T}$ is a $1$--tree. This is the easier case. The reason is that for every $P,P'\in\mathbf{T}$, $\omega_P\not=\omega_{P'}$ we have that $\omega_{P(r)}$ and $\omega_{P'(r)}$ are disjoint and thus we have good orthogonality of the summands in \eqref{eqn:FJdef}.
Using \eqref{eqn:ac_singleton} and \eqref{eqn:psiestimate} we see that
\[|F_J(x)| \lesssim \|m\|_{\mathscr{M}^{\nu_0}} \varepsilon \sum_{\substack{P\in \mathbf{T},\\|I_{P}|>|\parent{J}|}} (1+2^{-k_P}\norm{x-c(I_P)})^{-\nu_0} \mathbf{1}_{E_{P(r)}}(x). \]
Using disjointness of the $E_{P(r)}$ this can be estimated by
\[ \|m\|_{\mathscr{M}^{\nu_0}} \varepsilon\cdot \sup_{k\in\Z} \sum_{m\in\Z^n+\frac12} (1+2^{-k}\norm{x-\dil{m}{2^k}})^{-\nu_0}. \]
By an index shift we see that
\[ \sum_{m\in\Z^n+\frac12} (1+2^{-k}\norm{x-\dil{m}{2^k}})^{-\nu_0} = \sum_{m\in\Z^n+\frac12} (1+\norm{m+\gamma})^{-\nu_0},\]
where $\gamma\in [0,1]^n$ depends on $k$ and $x$. The last sum is $\lesssim 1$ independently of $\gamma$. Thus we proved the pointwise estimate
\begin{equation}
|F_{J}(x)|\lesssim \|m\|_{\mathscr{M}^{\nu_0}} \varepsilon.
\end{equation}
Combining this with the support estimate \eqref{eqn:GJest} we obtain
\begin{align}
\|F_{J}\|_{L^{1}(J)}\lesssim \|m\|_{\mathscr{M}^{\nu_0}} \varepsilon\mu |J|.
\end{align}
Summing over the pairwise disjoint $J\subset \fam{I_\mathbf{T}}$ we obtain
\[ \mathcal{K}_2 \lesssim \|m\|_{\mathscr{M}^{\nu_0}} \varepsilon\mu |I_\mathbf{T}| \]
as desired. Note that we only needed $\nu_0>|\alpha|$ to obtain this estimate.

\subsection{The case of $2$--trees}
Here we assume that $\mathbf{T}$ is a $2$--tree. The additional $x$-dependence present in the wave packets $\psi^{N(x)}_P$ makes this part more difficult than the congruent argument in \cite{LT00}. This problem arises already in the isotropic case \cite{PT03}. 
The goal is again to obtain a pointwise estimate for $F_J$. In the following we fix $x\in J $ such that $F_{J}(x)\not=0$. Observe that the $\omega_{P(r)}$, $P\in\mathbf{T}$ are nested. Let us denote the smallest (resp. largest) $\omega_{P(r)}$ (resp. $\omega_P$) such that $x\in N^{-1}(\omega_{P(r)})\cap E$ by $\omega_-$ (resp. $\omega_+$).  Let $k_+\in\Z$ be such that $|\omega_+|=2^{k_+|\alpha|}$ and $k_-\in\Z$ such that $|\omega_-|=2^{-k_-|\alpha|-n}$ (note from the definition that $\omega_-\not\in\mathcal{D}^\alpha$ if $\alpha\not=(1,\dots,1)$).
Then the nestedness property implies
\[ F_J(x) = \sum_{\substack{P\in\mathbf{T},\\ k_+ \le k_P\le k_-}}\epsilon_P \langle f,\phi_P\rangle \psi^{N(x)}_P(x) \]
Define
\[ h_x = \mathrm{M}_{c(\omega_+)}\mathrm{D}^1_{2^{k_+}}\phi_+ - M_{c(\omega_-)} D^1_{ 2^{k_-}}\phi_-, \]
where $\phi_+(x) = b_1^{-n} \phi(b_1^{-1} x)$ and $\phi_-$ is a Schwartz function satisfying $0\le \widehat{\phi_-}\le 1$ such that $\widehat{\phi_-}$ is supported on $[-\frac{b_2}2,\frac{b_2}2]$ and equals to one on $[-\frac{b_3}2,\frac{b_3}2]$, where $b_{j+2}=\frac12+b_j$ for $j=0,1$.
From the definition we see that $\widehat{h_x}$ is supported on $b_0 b_1^{-1}\omega_+ \cap (2b_3 \omega_-)^c$ and equal to one on $\omega_+\cap (2b_2\omega_-)^c$. In particular, $\widehat{h_x}(\xi)$ equals to one if $\xi\in \mathrm{supp}\,\phi_P$ and $k_+\le k_P\le k_-$ and vanishes if $k_P$ is outside this range.
For technical reasons that become clear further below we need the support of $\widehat{h_x}$ to keep a certain distance to $\omega_-$.
We obtain
\[ F_J(x) = \sum_{P\in\mathbf{T}} \epsilon_P \langle f,\phi_P\rangle (\psi^{N(x)}_P*h_x)(x). \]
Fix $\xi_0\in\omega_{\mathbf{T}}$. We decompose
\begin{align}
F_{J}(x)&=\sum_{P\in\mathbf{T}} \epsilon_P\langle f,\phi_P\rangle(\psi_P^{\xi_0}*h_x)(x) +
\sum_{P\in \mathbf{T}} \epsilon_P\langle f,\phi_P\rangle(\psi_P^{N(x)}-\psi_P^{\xi_0})*h_x)(x)\\\label{eqn:F2Jest2}
&=  G * \mathrm{M}_{\xi_0} K * h_x (x) + G*(\mathrm{M}_{N(x)} K-\mathrm{M}_{\xi_0} K)*h_x(x)
\end{align}
where
\begin{equation}
\label{eqn:G1def}
G=\sum_{P\in\mathbf{T}}\epsilon_P\langle f,\phi_P\rangle \phi_P.
\end{equation}
Before proceeding with the proof we record the following simple variant of a standard fact about maximal functions (see \cite{Duo01}).

\begin{lemma}\label{lemma:ac_max}
Let $\lambda>0$ and $w$ be an integrable function on $\R^n$ which is constant on $\{\rho(y)\le \lambda\}$ and radial and decreasing with respect to $\rho$, i.e.
\[w(x)\le w(y)\]
if $\norm{x}\ge \norm{y}$, with equality if $\norm{x}=\norm{y}$.
Let $x\in\R^n$ and $J\subset\R^n$ be such that $J\subset \{y\,:\,\norm{x-y}\le \lambda\}$.
Then we have
\[ |F*w|(x) \le \|w\|_1 \sup_{J\subset I} \frac1{|I|} \int_I |F(y)| dy, \]
where the supremum is taken over all anisotropic cubes $I\subset\R^n$.
\end{lemma}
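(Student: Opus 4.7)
The plan is a standard layer-cake decomposition of $w$, combined with the geometric observation that $\rho$-balls coincide with anisotropic cubes. There is no genuine analytic obstacle; the one thing to check carefully is that the translated level sets of $w$ qualify as anisotropic cubes in the paper's sense.

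First I would note that $w \ge 0$: integrability together with monotonicity in $\rho$ forces $w(y) \to 0$ as $\rho(y) \to \infty$, and monotonicity then propagates this upward to give $w \ge 0$ everywhere. Next, for every $r > 0$,
\[ \{y \in \R^n : \rho(y) \le r\} = \prod_{i=1}^n [-r^{\alpha_i}, r^{\alpha_i}], \]
which is an anisotropic cube in the paper's sense (with common parameter $\mu = 2r$, so that the $i$th side length is $\mu^{\alpha_i}$). Consequently every nonempty superlevel set $\{w > t\}$ equals a $\rho$-ball $\{\rho \le r(t)\}$ for some $r(t) > 0$, and because $w$ is constant on $\{\rho \le \lambda\}$ and radial-decreasing, any nonempty superlevel set contains this ball, so $r(t) \ge \lambda$.

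With these preliminaries in place I apply the layer-cake identity $w(y) = \int_0^\infty \mathbf{1}_{\{w > t\}}(y)\, dt$ and Fubini to obtain
\[ |F * w|(x) \le \int |F(x-y)|\, w(y)\, dy = \int_0^\infty \int_{I(t)} |F(z)|\, dz\, dt, \]
where $I(t) := \{z : \rho(x - z) \le r(t)\}$ is the translate of the level set. The key point is that $I(t)$ is an anisotropic cube centered at $x$; since $r(t) \ge \lambda$ and $J \subset \{z : \rho(x-z) \le \lambda\}$, each $I(t)$ contains $J$. Bounding $\int_{I(t)} |F|\, dz \le |I(t)| \sup_{J \subset I} |I|^{-1} \int_I |F|$ and then recognizing via layer cake once more that $\int_0^\infty |I(t)|\, dt = \int w = \|w\|_1$ produces the claimed inequality.
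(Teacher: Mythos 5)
Your argument follows essentially the same route as the paper: decompose $w$ as a nonnegative superposition of indicators of $\rho$-balls of radius $\ge\lambda$, observe each ball centered at $x$ contains $J$, and compare the resulting averages to the maximal function over anisotropic cubes. The paper phrases this with an approximation by step functions and dominated convergence; your layer-cake/Fubini version is the continuous analogue of the same decomposition and is perfectly fine, indeed slightly cleaner.

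There is, however, one slip to flag. You assert that the $\rho$-ball $\{\rho(y)\le r\}=\prod_i[-r^{\alpha_i},r^{\alpha_i}]$ is an anisotropic cube ``with common parameter $\mu=2r$, so that the $i$th side length is $\mu^{\alpha_i}$.'' But the $i$th side length of the ball is $2r^{\alpha_i}$, not $(2r)^{\alpha_i}$; these agree only when $\alpha_i=1$, and for $\alpha_i>1$ there is no single $\mu$ making $2r^{\alpha_i}=\mu^{\alpha_i}$ for all $i$. So in the genuinely anisotropic case a $\rho$-ball is not an anisotropic cube. The fix is standard and does not change the structure of the argument: the ball is contained in the anisotropic cube centered at $x$ with parameter $2r$, whose measure $(2r)^{|\alpha|}$ is comparable to $|\{\rho\le r\}|=2^n r^{|\alpha|}$ up to the factor $2^{|\alpha|-n}$, so you lose only a dimensional constant in the final inequality. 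The paper's own one-line computation dividing by $r_j^{|\alpha|}$ glosses over exactly the same normalization, and the lemma is only ever invoked with $\lesssim$, so the constant is harmless. With that correction your proof is complete.
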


\begin{proof}
First we assume that $w$ is a step function. That is,
\[ w(y) = \sum_{j=1}^\infty c_j \mathbf{1}_{\norm{y}\le r_j} \]
with $\lambda\le r_1<r_2<\cdots$.
Then we have
\[ F* w (x) = \sum_j r_j^{|\alpha|} c_j \frac1{r_j^{|\alpha|}}\int_{\norm{x-y}\le r_j} |F(y)| dy \le \|w\|_1 \sup_{J\subset I} \frac1{|I|} \int_I |F(y)| dy. \]
The general case follows by approximation of $w$ by step functions and an application of Lebesgue's dominated convergence theorem.
\end{proof}

Since
\begin{equation}\label{eqn:ac_hxest}
|h_x(y)|\lesssim 2^{-k_+|\alpha|} |\phi_+|(\dil{y}{2^{-k_+}})+2^{-k_-|\alpha|} |\phi_-|(\dil{y}{2^{-k_-}}) 
\end{equation}
and $x\in J$, $|J|\le 2^{k_+|\alpha|}\le 2^{k_-|\alpha|}$ we have from Lemma \ref{lemma:ac_max} that
\begin{equation}\label{eqn:ac_maximalest1}
|G*\mathrm{M}_{\xi_0}K*h_x(x)|\lesssim \sup_{J\subset I}\frac1{|I|}\int_I |G*\mathrm{M}_{\xi_0}K(y)| dy.
\end{equation}
Let us assume for the moment that we also have the estimate
\begin{equation}\label{eqn:ac_errest}
|G*(\mathrm{M}_{N(x)} K-\mathrm{M}_{\xi_0} K)*h_x(x)| \lesssim \|m\|_{\mathscr{M}^{\nu_0}}\sup_{J\subset I} \frac1{|I|}\int_I |G(y)| dy.
\end{equation}
We will first show how to finish the proof from here. At the end of the section we will then show that \eqref{eqn:ac_errest} indeed holds.\\
From \eqref{eqn:F2Jest2}, \eqref{eqn:ac_maximalest1}, \eqref{eqn:ac_errest} and Lemma \ref{lemma:ac_GJest} we see that
\[ \sum_{\substack{J\in\mathcal{J},\\J\subset \fam{I_\mathbf{T}}}} \|F_J\|_{L^1(J)} \lesssim \mu \sum_{\substack{J\in\mathcal{J},\\J\subset \fam{I_\mathbf{T}}}} |J| \left(\sup_{J\subset I}\frac1{|I|}\int_I |G*\mathrm{M}_{\xi_0}K(y)| dy + \|m\|_{\mathscr{M}^{\nu_0}}\sup_{J\subset I} \frac1{|I|}\int_I |G(y)| dy\right) \]
By disjointness of the $J\in\mathcal{J}$ this is no greater than
\begin{equation}\label{eqn:ac_twotreeest1}
\mu \left(\left\|\mathcal{M}(G*\mathrm{M}_{\xi_0}K)\right\|_{L^1(\fam{I_\mathbf{T}})} +\|m\|_{\mathscr{M}^{\nu_0}}\left\| \mathcal{M}(G) \right\|_{L^1(\fam{I_\mathbf{T}})}\right),
\end{equation}
where $\mathcal{M}$ denotes the maximal function defined by
\[ \mathcal{M} F(y) = \sup_{y\in I} \frac1{|I|} \int_I |F|,\]
where the supremum runs over all anisotropic cubes $I\subset\R^n$.
Clearly, $\mathcal{M}$ is a bounded operator $L^2(\R^n)\to L^2(\R^n)$, because it is bounded pointwise by a composition of one-dimensional Hardy-Littlewood maximal functions applied in each component.

Applying the Cauchy-Schwarz inequality and the $L^2$ boundedness of $\mathcal{M}$ we see that \eqref{eqn:ac_twotreeest1} is 
\[ \lesssim \mu |I_T|^{\frac12} \left(\left\|G*\mathrm{M}_{\xi_0}K\right\|_2 + \|m\|_{\mathscr{M}^{\nu_0}}\left\|G\right\|_2\right). \]
By repeating the  arguments that lead to the proof of \eqref{eqn:ac_energylemma7}, using \eqref{eqn:tileinteraction} or \eqref{eqn:tileinteraction2}, respectively, we obtain that
\[ \left\|G*\mathrm{M}_{\xi_0}K\right\|_2 +  \|m\|_{\mathscr{M}^{\nu_0}}\left\|G\right\|_2\lesssim \|m\|_{\mathscr{M}^{\nu_0}}\varepsilon  |I_\mathbf{T}|^{\frac12}.\]
This concludes the proof. It remains to prove \eqref{eqn:ac_errest}. Let us write 
\[ R(y)=(\mathrm{M}_{N(x)}K-\mathrm{M}_{\xi_0}K)*h_x(y). \]
We will give two different estimates for $R$. The first one is only effective if $\norm{y}$ is large and the second one if $\norm{y}$ is small. Let us start with the first estimate. By Fourier inversion, we can write $R(y)$ (up to a constant) as
\begin{equation}\label{eqn:ac_errestpfpt1_1}
\int_{\R^n} (m(\xi-N(x))-m(\xi-\xi_0)) \widehat{h_x}(\xi) e^{i\xi y} d\xi.
\end{equation}
Fix $y$ and let $i$ be such that $\norm{y}=|y_i|^{1/\alpha_i}$. Then we integrate by parts in the $i$th component to see that \eqref{eqn:ac_errestpfpt1_1} is bounded by
\begin{equation}\label{eqn:ac_errestpfpt1_2}
\lesssim \norm{y}^{-\nu'\alpha_i} \int_{\R^n}\Big| \partial^{\nu'}_{\xi_i} \Big[ (m(\xi-N(x))-m(\xi-\xi_0)) \widehat{h_x}(\xi) \Big] \Big| d\xi
\end{equation}
for integer $\nu'\ge 0$, where we have used that $\norm{y}\ge 2^{k_-}$ to estimate $|\dil{y}{2^{-k_-}}|\ge 2^{-k_-}\norm{y}$.

Let $\ell\le \nu_0$ be a non-negative integer. Using \eqref{eqn:mderivest} we obtain
\begin{equation}\label{eqn:ac_errestpfpt1_3}
\Big|\partial^\ell_{\xi_i}\Big[m(\xi-N(x))-m(\xi-\xi_0)\Big]\Big| \le \|m\|_{\mathscr{M}^\ell} ( \norm{\xi-N(x)}^{-\ell\alpha_i} + \norm{\xi-\xi_0}^{-\ell\alpha_i}).
\end{equation}
Recall that $\xi_0$ and $N(x)$ are contained in $\omega_-$ and the integrand of \eqref{eqn:ac_errestpfpt1_2} is supported on $b_0 b_1^{-1}\omega_+ \cap (2b_3 \omega_-)^c$. Also, there exist $\omega_1,\dots,\omega_M\in\mathcal{D}^\alpha$ such that
\[\omega_-\subsetneq\omega_1\subsetneq \cdots\subsetneq \omega_M=\omega_+\]
and $|\omega_{j}|=2^{-k_j|\alpha|}$ with $k_1=k_-$ and $k_{j+1}=k_j-1$. If $\xi\in (2b_3\omega_-)^c$ we have 
\begin{equation}\label{eqn:ac_errestpfpt1_4}
\min(\norm{\xi-N(x)},\norm{\xi-\xi_0}) \gtrsim 2^{-k_-}.
\end{equation}
On the other hand, if $\xi\in (b_0b_1^{-1}\omega_j)\cap \omega_{j-1}^c$ for  $j=2,\dots,M$, then
\begin{equation}\label{eqn:ac_errestpfpt1_5}
\min(\norm{\xi-N(x)},\norm{\xi-\xi_0})\gtrsim 2^{-k_j}.
\end{equation}
Combining \eqref{eqn:ac_errestpfpt1_3} and \eqref{eqn:ac_errestpfpt1_4}, \eqref{eqn:ac_errestpfpt1_5} we get
\begin{equation}\label{eqn:ac_errestpfpt1_6}
|\partial^\ell_{\xi_i}\Big[m(\xi-N(x))-m(\xi-\xi_0)\Big]| \lesssim \|m\|_{\mathscr{M}^\ell} \sum_{j=1}^M 2^{k_j \ell\alpha_i} \mathbf{1}_{b_0b_1^{-1}\omega_j}(\xi).
\end{equation}
We also have
\begin{equation}\label{eqn:ac_errestpfpt1_7}
|\partial^\ell_{\xi_i}\widehat{h_x}(\xi)| \lesssim 2^{k_+\ell\alpha_i}\mathbf{1}_{b_0b_1^{-1}\omega_+}(\xi) +2^{k_-\ell\alpha_i} \mathbf{1}_{2b_3\omega_-}(\xi).
\end{equation}

Thus we see from \eqref{eqn:ac_errestpfpt1_6} and \eqref{eqn:ac_errestpfpt1_7} that for all $i=1,\dots,n$ and $0\le\ell\le \nu'$ we obtain
\[ \int_{\R^n} \Big| \partial^{\ell}_{\xi_i} \Big[ m(\xi-N(x))-m(\xi-\xi_0) \Big] \partial^{\nu'-\ell}_{\xi_i}  \widehat{h_x}(\xi)  \Big|d\xi \lesssim \|m\|_{\mathscr{M}^{\nu'}} 2^{k_-(\nu'\alpha_i-|\alpha|)}, \] 
provided that $\nu'\alpha_i\ge |\alpha|$. Setting $\nu'=\lceil \frac{\nu_0}{\alpha_i}\rceil$, we have shown that
\begin{equation}\label{eqn:ac_errestpfpt1_8}
|R(y)| \lesssim \|m\|_{\mathscr{M}^{\nu_0}} 2^{-k_-|\alpha|} (2^{-k_-}\norm{y})^{-\nu_0}.
\end{equation}
It remains to find a good estimate for $R(y)$ when $\norm{y}$ is small. 
Let us estimate
\[ |R(y)|\le R_+(y) + R_-(y),\]
where
\[ R_{\pm} = |(\mathrm{M}_{N(x)}K-\mathrm{M}_{\xi_0}K)*\mathrm{D}^1_{2^{k_\pm}}\phi_{\pm}|.\]
The first claim is that if $\rho(y)\le 2^{k_\pm+1}$, then
\begin{equation}\label{eqn:ac_Rpmest}
R_{\pm}(y) \lesssim  \|m\|_{\mathscr{M}^{\nu_0}} 2^{-k_\pm|\alpha|}.
\end{equation}
(Here and throughout the proof of this claim $\pm$ always stands for a fixed choice of sign, either $+$ or $-$.) To see this, we first estimate $R_\pm(y)$ by
\[ 2^{-k_\pm|\alpha|} \int_{\R^n} |(e^{i(N(x)-\xi_0)z}-1) K(z) \phi_\pm(2^{-k_\pm}(y-z))| dz \lesssim 2^{-k_\pm|\alpha|}(\mathbf{I} + \mathbf{II}),\]
where 
\[ \mathbf{I} =  \int_{\norm{z}\le 2^{k_\pm+2}} |(e^{i(N(x)-\xi_0)z}-1) K(z) \phi_\pm(2^{-k_\pm}(y-z))| dz,\quad\text{and}\]
\[\mathbf{II}=\sum_{j=2}^\infty \int_{2^{k_\pm+j}\le \norm{z}\le 2^{k_\pm+j+1}}
|K(z) \phi_\pm(2^{-k_\pm}(y-z))| dz. \]
We first estimate $\mathbf{I}$. Changing variables $z\mapsto \dil{z}{2^{k_\pm+2}}$ we see that
\[ \mathbf{I} \lesssim \int_{\rho(z)\le 1}  |(e^{i\dil{N(x)-\xi_0}{2^{k_\pm+2}}z}-1) K(z)| dz. \]
Using \eqref{eqn:aniso_kernelest}, the previous display is
\[ \lesssim \|m\|_{\mathscr{M}^{\nu_0}} |\dil{N(x)-\xi_0}{2^{k_\pm+2}}|\int_{\rho(z)\le 1}  |z| \norm{z}^{-|\alpha|} dz. \]
Using $\norm{\dil{N(x)-\xi_0}{2^{k_\pm+2}}}=2^{k_\pm+2}\norm{N(x)-\xi_0}\lesssim 1$ we can bound this further as
\[\lesssim  \|m\|_{\mathscr{M}^{\nu_0}}\int_{\rho(z)\le 1} \norm{z}^{1-|\alpha|} dz \lesssim  \|m\|_{\mathscr{M}^{\nu_0}}. \]
This proves that $\mathbf{I}\lesssim  \|m\|_{\mathscr{M}^{\nu_0}}$. It remains to treat $\mathbf{II}$. Here we make use of the fact that we have $\norm{y-z}\ge 2^{k_\pm+j-1}$ in the integrand of $\mathbf{II}$, because of our assumption $\rho(y)\le 2^{k_\pm}$. Using the decay of $\phi_\pm$ we obtain
\[\mathbf{II} \lesssim  \|m\|_{\mathscr{M}^{\nu_0}}\sum_{j=2}^\infty 2^{-j} \int_{2^{k_\pm+j}\le \norm{z}\le 2^{k_\pm+j+1}} \norm{z}^{-|\alpha|} dz\lesssim  \|m\|_{\mathscr{M}^{\nu_0}}. \]
Thus we have proved \eqref{eqn:ac_Rpmest}. The only further ingredient which we need in order to verify \eqref{eqn:ac_errest} is a good estimate for $R_+(y)$ when $2^{k_++1}\le \norm{y}\le 2^{k_-}$. In order to do this we need to do a slightly more careful decomposition. Let us write
\[ Q_\ell = [\dil{\ell}{2^{k_+}},\dil{\ell+1}{2^{k_+}})=\prod_{i=1}^n [2^{k_+\alpha_i} \ell_i, 2^{k_+\alpha_i} (\ell_i+1)) \]
for $\ell\in\Z^n$. Assume that $y\in Q_\ell$ with $1\le|\ell|_\infty< 2^{k_--k_+}$. We have
\begin{equation}\label{eqn:ac_Rplusest1}
R_+(y) \le 2^{-k_+|\alpha|} \sum_{s\in\Z^n} \int_{Q_s} |(e^{i(N(x)-\xi_0)z}-1) K(z) \phi_+(2^{-k_+}(y-z))| dz.
\end{equation}
Moreover, the same estimates that were used to prove \eqref{eqn:ac_Rpmest} yield
\begin{align*}
\int_{Q_s} & |(e^{i(N(x)-\xi_0)z}-1) K(z) \phi_+(2^{-k_+}(y-z))| dz\\
& \lesssim \|m\|_{\mathscr{M}^{\nu_0}} 2^{k_+-k_-} (1+\norm{s-\ell})^{-|\alpha|-1} (1+\norm{s})^{1-|\alpha|}
\end{align*}
Plugging this inequality into \eqref{eqn:ac_Rplusest1} we obtain
\begin{align*}
R_+(y) &\lesssim \|m\|_{\mathscr{M}^{\nu_0}} 2^{-k_-} 2^{k_+(1-|\alpha|)} \sum_{s\in\Z^n} (1+\norm{s-\ell})^{-|\alpha|-1} (1+\norm{s})^{1-|\alpha|}\\
&\lesssim \|m\|_{\mathscr{M}^{\nu_0}} 2^{-k_-} (2^{k_+} \norm{\ell})^{1-|\alpha|},
\end{align*}
where the last inequality requires $\nu$ to be large enough. Therefore,
\begin{equation}\label{eqn:ac_Rplusest2}
R_+(y) \lesssim \|m\|_{\mathscr{M}^{\nu_0}} 2^{-k_-} \norm{y}^{1-|\alpha|}.
\end{equation}
Finally, summarizing \eqref{eqn:ac_errestpfpt1_8}, \eqref{eqn:ac_Rpmest} and \eqref{eqn:ac_Rplusest2} we have shown that
\[ |R(y)| \lesssim \|m\|_{\mathscr{M}^{\nu_0}} (w_0(y) + w_+(y) + w_-(y) + w_1(y)), \]
where
\[ w_0(y) = 2^{-k_-|\alpha|} (2^{-k_-}\norm{y})^{-|\alpha|-1} \mathbf{1}_{\norm{y}\ge 2^{k_-}}, \]
\[ w_\pm (y) = 2^{-k_\pm|\alpha|}\mathbf{1}_{\norm{y}\le 2^{k_\pm+1}},\]
\[ w_1(y) = 2^{-k_-} \norm{y}^{1-|\alpha|}\mathbf{1}_{2^{k_++1}\le \norm{y}\le 2^{k_-}}.\]
Each of these functions is integrable with an $L^1(\R^n)$ norm not depending on $k_-, k_+$, radial and decreasing with respect to $\rho$ in the sense of Lemma \ref{lemma:ac_max} and constant on $\{\rho(y)\le 2^{k_+}\}$ or $\{\rho(y)\le 2^{k_-}\}$. Thus, applying Lemma \ref{lemma:ac_max} to each of these functions yields \eqref{eqn:ac_errest}. Note that to prove \eqref{eqn:ac_errest} we only required that $\nu_0\ge|\alpha|+1$.

\section{Proofs of auxiliary estimates}\label{sec:ac_wavepacketest}

In this section we prove  \eqref{eqn:psiestimate}, \eqref{eqn:tileinteraction} and \eqref{eqn:tileinteraction2}.

\begin{proof}[Proof of \eqref{eqn:psiestimate}]
Expanding definitions and using Fourier inversion we see that, up to a universal constant, $|\psi^N_P(x)|$ is equal to
\[  2^{k_P|\alpha|/2} \left| \int_{\R^n} e^{i\xi(x-c(I_P))} m(\xi-N) \widehat{\phi}(\dil{\xi-c(\omega_{P(0)})}{2^{k_P}}) d\xi \right|. \]
Via a change of variables $\dil{\xi-c(\omega_{P(0)})}{2^{k_P}}\to \zeta$ and using that $m(\xi)=m(\dil{\xi}{2^{k_P}})$ this becomes
\begin{equation}\label{eqn:ac_wavepacketcalc1}
2^{-k_P|\alpha|/2} \left| \int_{\R^n} e^{i\zeta \dil{x-c(I_P)}{2^{-k_P}}} m(\zeta+\dil{c(\omega_{P(0)})-N}{2^{k_P}}) \widehat{\phi}(\zeta) d\zeta \right|.
\end{equation}

Let us fix $x$ and $P$ and take $i$ to be such that $\norm{x-c(I_P)}=|x_i-c(I_P)_i|^{1/\alpha_i}$. From repeated integration by parts we see that \eqref{eqn:ac_wavepacketcalc1} is bounded by
\[ 2^{-k_P|\alpha|/2} (2^{-k_P}\norm{x-c(I_P)})^{-\nu' \alpha_i} \int_{\R^n} \left| \partial^{\nu'}_{\zeta_i} ( m(\zeta+\dil{c(\omega_{P(0)})-N}{2^{k_P}}) \widehat{\phi}(\zeta) )\right| d\zeta, \]
for every integer $\nu'\ge 0$. We set $\nu'=\lceil\nu/\alpha_i\rceil\le \nu$. Since $N\not\in\omega_{P(0)}$ we have $|\zeta+\dil{c(\omega_{P(0)})-N}{2^{k_P}}|\gtrsim 1$. Therefore, 
\[ \int_{\R^n} \left| \partial^{\nu'}_{\zeta_i} ( m(\zeta+\dil{c(\omega_{P(0)})-N}{2^{k_P}}) \widehat{\phi}(\zeta) )\right| d\zeta \lesssim \|m\|_{\mathscr{M}^{\nu'}}\le \|m\|_{\mathscr{M}^{\nu}}. \]
This concludes the proof of \eqref{eqn:psiestimate} in the case that $\norm{x-c(I_P)}\ge 1$. In the case $\norm{x-c(I_P)}\le 1$ we simply use the triangle inequality on  \eqref{eqn:ac_wavepacketcalc1}.
\end{proof}

\begin{proof}[Proof of \eqref{eqn:tileinteraction} and \eqref{eqn:tileinteraction2}]
If $c(I_P)=c(I_{P'})$, the estimates are trivial. Thus we may assume $c(I_P)\not=c(I_{P'})$. We have
\begin{equation}\label{eqn:ac_tileinteract1}
|\langle \phi_P, \phi_{P'}\rangle| \le |I_P|^{-\frac12} |I_{P'}|^{-\frac12} \int_{\R^n} |\phi(\dil{x-c(I_P)}{2^{k_P}})\phi(\dil{x-c(I_{P'})}{2^{k_{P'}}}) | dx.
\end{equation}
Since
\[ \norm{c(I_P)-c(I_{P'})} \le \norm{x-c(I_P)} + \norm{x-c(I_{P'})}, \]
at least one of $\norm{x-c(I_P)}, \norm{x-c(I_{P'})}$ is $\ge \frac12 \norm{c(I_P)-c(I_{P'})}$.
Thus, splitting the integral over $x$ accordingly and using rapid decay of $\phi$, the right hand side of \eqref{eqn:ac_tileinteract1} is no greater than a constant times
\[ |I_P|^{-\frac12} |I_{P'}|^{\frac12} (1+2^{-k_P}\norm{c(I_P)-c(I_{P'})})^{-\nu} + |I_P|^{\frac12} |I_{P'}|^{-\frac12} (1+2^{-k_{P'}}\norm{c(I_P)-c(I_{P'})})^{-\nu}. \]
Recalling that we assumed $|I_P|\ge |I_{P'}|$ we see that the previous display is bounded by a constant times
\[|I_P|^{-\frac12} |I_{P'}|^{\frac12} (1+2^{-k_P}\norm{c(I_P)-c(I_{P'})})^{-\nu}.\]
This proves \eqref{eqn:tileinteraction}.
The estimate \eqref{eqn:tileinteraction2} can be proven in the same way, by using the decay estimate \eqref{eqn:psiestimate}.
\end{proof}


\begin{thebibliography}{GPRY17}
	
	\bibitem[Car66]{Car66}
	L.~Carleson.
	\newblock On convergence and growth of partial sums of {F}ourier series.
	\newblock {\em Acta Math.}, 116:135--157, 1966.
	
	\bibitem[Duo01]{Duo01}
	J.~Duoandikoetxea.
	\newblock {\em Fourier analysis}, volume~29 of {\em Graduate Studies in
		Mathematics}.
	\newblock American Mathematical Society, Providence, RI, 2001.
	\newblock Translated and revised from the 1995 Spanish original by D.
	Cruz-Uribe.
	
	\bibitem[Fef73]{Fef73}
	C.~Fefferman.
	\newblock Pointwise convergence of {F}ourier series.
	\newblock {\em Ann. of Math. (2)}, 98:551--571, 1973.
	
	\bibitem[FR66]{FR66}
	E.~Fabes and N.~Rivi\`{e}re.
	\newblock Singular integrals with mixed homogeneity.
	\newblock {\em Stud. Mat.}, 27(1):19--38, 1966.
	
	\bibitem[GPRY17]{GPRY16}
	S.~Guo, L.~B. Pierce, J.~Roos, and P.-L. Yung.
	\newblock Polynomial {C}arleson operators along monomial curves in the plane.
	\newblock {\em J. Geom. Anal.}, 27(4):2977--3012, 2017.
	
	\bibitem[GTT04]{GTT04}
	L.~Grafakos, T.~Tao, and E.~Terwilleger.
	\newblock {$L^p$} bounds for a maximal dyadic sum operator.
	\newblock {\em Math. Z.}, 246(1-2):321--337, 2004.
	
	\bibitem[KT80]{KT80}
	C.~Kenig and P.~A. Tomas.
	\newblock Maximal operators defined by {F}ourier multipliers.
	\newblock {\em Studia Math.}, 68:79--83, 1980.
	
	\bibitem[{Lac}04]{Lac04}
	M.~T. {Lacey}.
	\newblock {Carleson's Theorem: Proof, Complements, Variations}.
	\newblock {\em Publ. Mat.}, 48(2):251--307, 2004.
	
	\bibitem[Lie09]{Lie09}
	V.~Lie.
	\newblock The (weak--{$L^2$}) {B}oundedness of the {Q}uadratic {C}arleson
	{O}perator.
	\newblock {\em Geom. Funct. Anal.}, 19:457--497, 2009.
	
	\bibitem[LT00]{LT00}
	M.~T. Lacey and C.~Thiele.
	\newblock A proof of boundedness of the {C}arleson operator.
	\newblock {\em Math. Res. Lett.}, 7(4):361--370, 2000.
	
	\bibitem[PT03]{PT03}
	M.~Pramanik and E.~Terwilleger.
	\newblock A weak {$L^2$} estimate for a maximal dyadic sum operator on {${\mathbb{R}}^n$}.
	\newblock {\em Illinois J. Math.}, 47(3):775--813, 2003.
	
	\bibitem[PY15]{PY15}
	L.~B. Pierce and P.-L. Yung.
	\newblock A polynomial {C}arleson operator along the paraboloid.
	\newblock 2015.
	\newblock arXiv:1505.03882. To appear in Rev. Mat. Iberoam.
	
	\bibitem[Sj{\"o}71]{Sjo71}
	P.~Sj{\"o}lin.
	\newblock Convergence almost everywhere of certain singular integrals and
	multiple {F}ourier series.
	\newblock {\em Ark. Mat.}, 9:65--90, 1971.
	
	\bibitem[Ste93]{Ste93}
	E.~M. Stein.
	\newblock {\em Harmonic Analysis: real-variable methods, orthogonality, and
		oscillatory integrals}, volume~43 of {\em Princeton Mathematical Series}.
	\newblock Princeton University Press, 1993.
	
	\bibitem[Thi06]{Thi06}
	C.~Thiele.
	\newblock {\em Wave Packet Analysis}.
	\newblock Number 105 in CBMS Regional Conference Series in Mathematics. AMS,
	2006.
	
\end{thebibliography}
\end{document}